\newtheorem{theorem}{Theorem}[section]
\newtheorem{lemma}[theorem]{Lemma}
\newtheorem{corollary}[theorem]{Corollary}
\newtheorem{proposition}[theorem]{Proposition}
\theoremstyle{definition}
\newtheorem{definition}[theorem]{Definition}
\newtheorem{exm}[theorem]{Example}
\newtheorem{rem}[theorem]{Remark}
\newtheorem*{ack}{Acknowledgments}
\newcommand{\mysetminus}{\mathbin{\fgebackslash}}
\popQED\end{exm}}  % end example environment with a qed-symbol
\newenvironment{remark}%
{\pushQED{\qed}\begin{rem}}%
{\popQED\end{rem}}  % end remark environment with a qed-symbol
\newcommand{\abs}[1]{\left|#1\right|}
\newcommand{\set}[1]{\left\{#1\right\}}
\newcommand{\angl}[1]{\left<#1\right>}
\newcommand{\cT}{\mathscr{T}}  % a couple of categories
\newcommand{\cE}{\mathscr{E}}
\newcommand{\Z}{\mathbb{Z}}
\newcommand{\M}{\mathsf{M}}    % a matroid
\newcommand{\CP}{\mathbb{CP}}
\newcommand{\PP}{\mathbb{P}}
\newcommand{\DD}{\mathbb{D}}   % polydisc
\newcommand{\Q}{\mathbb{Q}}
\newcommand{\R}{\mathbb{R}}
\newcommand{\C}{\mathbb{C}}
\newcommand{\T}{\mathbb{T}}
\renewcommand{\k}{\Bbbk}  
\newcommand{\one}{\mathbbm{1}}
\newcommand{\VN}{\mathcal{N}}  % group von neumann algebra
\newcommand{\m}{\mathfrak{m}}
\DeclareMathOperator{\Tor}{Tor}
\DeclareMathOperator{\Ext}{Ext}
\DeclareMathOperator{\Hom}{Hom}
\DeclareMathOperator{\ab}{ab}
\DeclareMathOperator{\im}{im}
\DeclareMathOperator{\rank}{rank}
\DeclareMathOperator{\GL}{GL}  
\DeclareMathOperator{\PGL}{PGL}  
\DeclareMathOperator{\depth}{depth}
\DeclareMathOperator{\MCM}{MCM} % generalized maximal Cohen-Macaulay
\DeclareMathOperator{\OS}{OS}
\DeclareMathOperator{\SO}{SO}
\DeclareMathOperator{\gps}{Gps}  % the category of groups
\DeclareMathOperator{\stab}{stab}  % stablilizer subgroup
\DeclareMathOperator{\Funct}{Funct}  % functors
\DeclareMathOperator{\codim}{codim}
\DeclareMathOperator{\pr}{pr}
\DeclareMathOperator{\rd}{red}   % reduced cohomology
\newcommand{\pairs}{{\mathcal P}} % indices for orbit config space
\newcommand{\A}{{\mathcal{A}}}
\newcommand{\B}{{\mathcal{B}}}
\newcommand{\F}{{\mathcal{F}}}
\newcommand{\G}{{\mathcal{G}}}
\renewcommand{\L}{{\mathcal{L}}} % an arrangement
\newcommand{\CC}{{\mathcal{C}}} % complex of sheaves
\newcommand{\V}{{\mathcal V}}
\newcommand{\N}{\mathscr{N}}  % nested set complex: was \mathcal, but
\def\dot{\mathchar"013A}  
\newcommand{\hdot}{{\raise1pt\hbox to0.35em{\huge $\dot$}}} 
\def\set#1{\{#1\}}
\newcommand{\bwedge}{\mbox{$\bigwedge$}}
\newcommand{\cdga}{\textsc{cdga}}
\def\set#1{{\left\{#1\right\}}}
\begin{document}

\title[Local systems and arrangements of hypersurfaces]%
{Local systems on complements of arrangements of smooth, complex algebraic hypersurfaces}

\author[G. Denham]{Graham Denham$^1$} 
\address{Department of Mathematics, University of Western Ontario,
London, ON  N6A 5B7, Canada}  
\email{\href{mailto:gdenham@uwo.ca}{gdenham@uwo.ca}}
\urladdr{\href{http://www.math.uwo.ca/~gdenham}%
{http://www.math.uwo.ca/\~{}gdenham}}
\thanks{$^1$Research supported by NSERC of Canada}

\author[A.~I. Suciu]{Alexander~I.~Suciu$^2$}
\address{Department of Mathematics,
Northeastern University,
Boston, MA 02115, USA}
\email{\href{mailto:a.suciu@northeastern.edu}{a.suciu@northeastern.edu}}
\urladdr{\href{http://web.northeastern.edu/suciu/}%
{http://web.northeastern.edu/suciu/}}
\thanks{$^2$Partially supported by 
Simons Foundation collaborative grant 354156}

\begin{abstract}  
We consider smooth, complex quasi-projective varieties $U$ which admit
a compactification with a boundary which is an arrangement of smooth   
algebraic hypersurfaces.  If the hypersurfaces intersect locally like 
hyperplanes, and the relative interiors of the hypersurfaces are 
Stein manifolds, we prove that the cohomology of certain local systems 
on $U$ vanishes.  As an application, we show that complements of linear, 
toric, and elliptic arrangements are both duality and abelian duality spaces.
\end{abstract}

\subjclass[2010]{Primary
55N25; %% Homology with local coefficients, equivariant cohomology
Secondary
14M27,  %% Compactifications; symmetric and spherical varieties
20J05,  %% Homological methods in group theory
32E10,  %% Stein spaces; Stein manifolds
32S22,  %% Relations with arrangements of hyperplanes
55P62,  %% Rational homotopy theory
55R80,  %% Discriminantal varieties, configuration spaces
55U30,  %% Duality
57M07.  %% Topological methods in group theory
}

\keywords{Arrangement of hypersurfaces, Stein manifold, 
projective variety, wonderful model, cohomology with 
local systems, minimal model, duality space, abelian duality space, 
hyperplane arrangement.}

\maketitle

\section{Introduction}
\label{sect:intro}

\subsection{Abelian duality and local systems}
\label{subsec:intro1}
It has long been recognized that complements of complex hyperplane 
arrangements satisfy certain vanishing properties for homology with 
coefficients in local systems.  We revisited this subject in our joint 
work with Sergey Yuzvinsky, \cite{DSY1, DSY2},  in a more general context.  

Let $X$ be a connected, finite-type CW complex, with fundamental group $G$.  
Following Bieri and Eckmann \cite{BE}, we say that $X$ is a {\em 
duality space}\/ of dimension $n$ if $H^q(X,\Z{G})=0$ 
for $q\ne n$ and $H^n(X,\Z{G})$ is non-zero and torsion-free. 
We  also say that $X$ is an {\em abelian duality space}\/ of dimension 
$n$ if the analogous condition, with the coefficient $G$-module $\Z{G}$ 
replaced by $\Z{G}^{\ab}$ is satisfied. Setting $D:=H^n(X,\Z{G}^{\ab})$,  
it follows that $H^i(X,A)\cong H_{n-i}(G^{\ab},D\otimes_\Z{A})$
for any representation $A$ of $G$ which factors through  
$G^{\ab}$, and for all $i\ge 0$. 

Noteworthily, the abelian duality property imposes significant conditions 
on the cohomology of local systems on $X$.  Let $\k$ be an algebraically 
closed field.  The group $\widehat{G}=\Hom_{\gps}(G,\k^*)$ 
of $\k$-valued multiplicative characters of $G$ 
is an algebraic group, 
with identity the trivial representation $\one$.
The {\em characteristic varieties}\/ $\V^q(X,\k)$  are the 
subsets of $\widehat{G}$ consisting of those characters 
$\rho$ for which $H^q(X,\k_\rho)\neq0$.  
We highlight an interesting consequence of the abelian 
duality space property, which was established in \cite{DSY2}:   
If $X$ is an abelian duality space of dimension $n$,   
then the characteristic varieties of $X$ {\em propagate}, 
that is, 
\begin{equation}
\label{eq:cvprop}
\set{\one}=\V^0(X,\k)\subseteq \V^1(X,\k)\subseteq\cdots\subseteq\V^{n}(X,\k),
\end{equation}
or, equivalently, if $H^p(X,\k_\rho)\neq 0$ for some $\rho\in \widehat{G}$,
then $H^q(X,\k_\rho)\neq0$ for all $p\leq q\leq n$. 

The abelian duality property also imposes stringent conditions 
on the cohomology groups $H^i(X,\Z) = \Tor_{n-i}(D,\Z)$. For 
instance, all the Betti numbers $b_i(X)$ must be positive 
for $0\le i\le n$ and vanish for $i>n$, while $b_1(X)\ge n$.  
Furthermore, as noted in \cite[Theorem 1.8]{LMWb}, the results of \cite{DSY2} 
imply the following inequality for the `signed Euler characteristic'  
of an abelian duality space of dimension $n$:
\begin{equation}
\label{eq:signedeuler}
(-1)^n\chi(X)\geq 0.
\end{equation}

\subsection{Arrangements of smooth hypersurfaces}
\label{subsec:intro2}
Davis, Januszkiewicz, Leary, and Okun showed in \cite{DJLO11} 
that complements of (linear) hyperplane arrangements are duality 
spaces.  More generally, we proved in \cite{DSY2} that complements 
of both linear and elliptic arrangements are duality and abelian duality 
spaces.  

Our goal here is to further generalize these results to a much wider class 
of {\em arrangements of hypersurfaces}, by which we mean a 
collection of smooth, irreducible, codimension $1$ subvarieties 
which are embedded in a smooth, connected, complex projective 
algebraic variety, and which intersect locally like hyperplanes.  
We isolate a subclass of such arrangements
whose complements enjoy the aforementioned duality properties, 
and therefore have vanishing twisted cohomology in the appropriate range. 

\begin{theorem}
\label{thm:intro1}
Let $U$ be a connected, smooth, complex quasi-projective variety
of dimension $n$.  Suppose $U$ has a smooth compactification $Y$ for which
\begin{enumerate}
\item \label{c1}
Components of the boundary $D=Y\mysetminus U$ form 
a non-empty arrangement of hypersurfaces $\A$;
\item  \label{c2}
For each submanifold $X$ in the intersection poset $L(\A)$,  
the complement of the restriction of $\A$ to $X$ is either 
empty or a Stein manifold. 
\end{enumerate}
Then $U$ is both a duality space and an abelian duality space 
of dimension $n$.
\end{theorem}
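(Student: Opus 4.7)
The plan is to deduce both duality properties from a single vanishing statement: $H^q(U; \L) = 0$ for $q \neq n$ whenever $\L$ is a local system on $U$, together with non-vanishing and torsion-freeness at $q = n$ for $\L = \Z G$ and $\L = \Z G^{\ab}$. The easy half, vanishing above degree $n$, comes essentially for free from condition (2) applied with $X = Y$ (the maximal element of $L(\A)$), which says that $U$ itself is Stein; the Andreotti--Frankel theorem then gives $U$ the homotopy type of a finite CW complex of real dimension at most $n$, so $H^q(U; \L) = 0$ for all $q > n$ and every local system $\L$.

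For the vanishing below degree $n$, I would stratify $Y$ via the intersection poset $L(\A)$ and proceed by an iterated Mayer--Vietoris or stratification spectral sequence. Condition (1) implies that near any stratum $X^\circ$ of complex codimension $k$, a tubular neighborhood in $U$ fibers over $X^\circ \cap U$ with fiber $(\C^*)^k$, while condition (2) ensures the base is Stein of complex dimension $n - k$. The cohomological dimension of each local piece is therefore bounded by $(n-k) + k = n$, with the ``top'' contribution concentrated precisely in degree $n$. Feeding these local models into the spectral sequence for the cover of $U$ by tubular neighborhoods of strata, the Orlik--Solomon-type combinatorics inherited from the local hyperplane model---in the spirit of the Koszul and minimal-model arguments of \cite{DSY1, DSY2}---should force the abutment to concentrate in total degree $n$ for the coefficient modules of interest.

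The main obstacle is establishing non-vanishing and torsion-freeness of $H^n(U; \Z G)$ and $H^n(U; \Z G^{\ab})$. I would attack this by constructing an explicit minimal CW model of $U$ whose top-dimensional cells are indexed by a canonical basis (of ``no broken circuit'' type) for the top graded piece of an Orlik--Solomon-style algebra attached to $L(\A)$. Pulling this back to the universal (respectively, maximal abelian) cover and restricting to top cochains yields a free module over the relevant group ring, and torsion-freeness of $H^n$ should then reduce inductively along the stratification to the corresponding claim for each stratum complement, with the base case being the classical duality of a smooth affine curve. The chief technical difficulty is producing such a minimal model functorially in the full generality of ``locally hyperplane-like, stratum-wise Stein'' arrangements, rather than just the linear, toric, or elliptic cases previously treated.
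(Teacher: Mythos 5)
Your high-level plan---establish vanishing of $H^q(U;A)$ outside degree $n$ for $A = \Z G$ and $\Z G^{\ab}$, then read off both duality properties---is the right shape, and you correctly identify that the Stein hypothesis on $U$ itself kills everything above degree $n$. (Minor slip: under the paper's convention, $Y$ is the \emph{minimum} of $L(\A)$, not the maximum, but the point stands.) The rest of the proposal, however, misses the central technical mechanism of the paper and adds an unnecessary, and in fact problematic, detour.

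The real gap is in the vanishing below degree $n$. You model a neighborhood of a codimension-$k$ stratum $X^\circ$ as fibering with fiber $(\C^*)^k$; this is only correct when $\A$ has normal crossings along $X$. In general the local model is the complement $M(T\A_X)$ of an arbitrary central hyperplane arrangement in $\C^k$, and the spectral sequence of Theorem~\ref{thm:ss-dsy} has $E_2$-term built from $H^{p+r(X)}_c(M(\A^X); H^{q-r(X)}(M(T\A_X),\F_X))$. The Stein hypothesis on each $M(\A^X)$ controls the $p$-direction, but to concentrate the $q$-direction you must show $H^i(M(T\A_X),A)=0$ for $i\ne r(X)$. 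That follows from \cite{DSY1} \emph{only if} the coefficient module $A$ is maximal Cohen--Macaulay over the group rings of the free abelian subgroups $C_{S,X}$ arising from nested sets in the wonderful model of $T\A_X$. Verifying this MCM property for $A=\Z G$ and $\Z G^{\ab}$ is precisely where the paper's key new ingredient, Lemma~\ref{lem:inj}, enters: a $1$-minimal model / rational-homotopy argument showing that the composite $\pi_1(U_S)\to\pi_1(M(T\A_X))\to\pi_1(M(\A))$ is injective, so that $\Z G$ restricted to $\Z[C_{S,X}]$ is free (hence MCM). Your proposal has no analogue of this step; ``Orlik--Solomon-type combinatorics'' and ``the spirit of the Koszul and minimal-model arguments'' gesture toward it, but without the injectivity the required vanishing in the $q$-direction is simply unavailable.

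The detour on torsion-freeness and non-vanishing of $H^n$ is misdirected. Once $U$ is known to have the homotopy type of a finite $n$-dimensional CW complex (Stein) with $H^q(U;\Z G)=0$ for $q\ne n$, torsion-freeness and non-vanishing of $H^n(U;\Z G)$ come for free from the Bieri--Eckmann framework; the paper uses this silently and never constructs a minimal cell structure. Building an explicit minimal CW model with NBC-indexed top cells is not known to be possible in this generality---as you yourself note---and it is not needed. So the part you flag as the chief technical difficulty is in fact routine, while the part you treat heuristically (concentration of the stratification spectral sequence) is where all the work lies.
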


An important consequence of this theorem is that the characteristic varieties 
of such ``recursively Stein'' hypersurface complements propagate. 
As another application of Theorem~\ref{thm:intro1}, we prove in Corollary 
\ref{cor:generic vanishing} %and \ref{cor:l2} 
the following `generic vanishing of cohomology' result.  
We use here De Concini and Procesi's \cite{DP95} construction of wonderful 
models for subspace arrangements, based on the notion of `building sets'; 
see \cite{Fe05} for an exposition. 

\begin{theorem}
\label{thm:genvanish}
Let $U$ be a smooth, quasi-projective variety of dimension $n$ satisfying 
conditions \eqref{c1} and \eqref{c2} from above.  Furthermore, let  
$G=\pi_1(U)$, and let $A$ be a finite-dimensional representation
of $G$ over a field $\k$.  Suppose that $A^{\gamma_g}=0$ for all
$g$ in a building set $\G_X$, where $X\in L(\A)$;   
then $H^i(U,A)=0$ for all $i\neq n$.
\end{theorem}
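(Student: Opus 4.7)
The plan is to combine the duality statement of Theorem~\ref{thm:intro1} with Poincar\'e duality for the open manifold $U$, bridged by a local analysis of $A$ on the wonderful compactification associated to $\G_X$. Theorem~\ref{thm:intro1} already gives that $U$ is a duality space of dimension $n$, so $H^i(U,M)=0$ for $i>n$ and every local system $M$ of finite-dimensional vector spaces; it therefore suffices to prove the vanishing in degrees $i<n$.

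Following \cite{DP95} (see also \cite{Fe05}), form the wonderful model $\pi\colon Y_{\G_X}\to Y$. By construction, $\pi^{-1}(D)$ is a simple normal crossing divisor whose irreducible components are indexed by $\G_X$, and the meridian in $\pi_1(U)$ around each such component is conjugate to one of the elements $\gamma_g$ of the hypothesis. Let $j\colon U\hookrightarrow Y_{\G_X}$ denote the open inclusion, and---by abuse of notation---also write $A$ for the local system on $U$ attached to the representation~$A$.

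I would next show that $Rj_{*}A=j_{!}A$. At a point $p$ lying on the intersection of $k$ components of $\pi^{-1}(D)$, a deleted polydisk neighborhood inside $U$ has fundamental group $\Z^k$ whose standard generators act on $A$ through the meridians $\gamma_{g_1},\dots,\gamma_{g_k}$. The stalk $(Rj_{*}A)_p$ is then computed by the Koszul complex built from the commuting endomorphisms $\gamma_{g_i}-1$ of $A$. Under the hypothesis $A^{\gamma_{g_i}}=0$, each $\gamma_{g_i}-1$ is invertible on the finite-dimensional space $A$, so this Koszul complex is acyclic and $(Rj_{*}A)_p=0$. Consequently $H^*(U,A)\cong H^*(Y_{\G_X},j_{!}A)\cong H^*_c(U,A)$.

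The final step is Poincar\'e duality for the oriented $2n$-manifold $U$: $H^i(U,A)\cong H^{2n-i}_c(U,A^\vee)^{*}$. Since the hypothesis on $A$ is preserved under passage to $A^\vee$, the previous paragraph applies equally to $A^\vee$, giving $H^{2n-i}_c(U,A^\vee)\cong H^{2n-i}(U,A^\vee)$. Combined with the cohomological dimension bound $H^j(U,A^\vee)=0$ for $j>n$, this forces $H^i(U,A)=0$ for $i<n$, completing the argument. The main technical hurdle is verifying that the meridians around every irreducible component of $\pi^{-1}(D)$ in $Y_{\G_X}$ are indeed captured (up to conjugacy in $\pi_1(U)$) by the elements $\gamma_g$ featured in the hypothesis: this is where the nested-set formalism of \cite{DP95} must be invoked to match the combinatorics of the wonderful model with the stated monodromy condition.
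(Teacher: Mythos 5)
Your approach is genuinely different from the paper's and, in outline, is plausible. The paper proves Theorem \ref{thm:genvanish} (stated precisely as Corollary \ref{cor:generic vanishing}) in two quick steps: cite \cite[Lem.~5.7]{DSY1} to show that the hypothesis makes $A$ a $\MCM$ module, then invoke Theorem \ref{thm:mcm}, whose proof runs the combinatorial-cover spectral sequence of \cite{DSY1} and uses the Stein hypothesis on \emph{every} stratum $M(\A^X)$. You instead take the ``Artin vanishing'' route: pass to a simple normal crossings wonderful compactification, show $Rj_*A \simeq j_!A$ by a local Koszul-complex argument, and then combine Poincar\'e--Verdier duality with the cohomological-dimension bound coming from the Stein property of $U$ alone. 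Both strategies are legitimate, but note a few things. First, your appeal to Theorem \ref{thm:intro1} in the opening paragraph is circular in spirit and unnecessary: the bound $H^i(U,M)=0$ for $i>n$ already follows because $U$ is Stein of complex dimension $n$. Second, your argument uses only the Stein hypothesis on $U$ itself, not on the lower-dimensional strata, so if it can be completed it would prove a somewhat stronger statement than the paper's. Third---and this is where the real work lies---the gap you flag at the end is substantial: the $\gamma_g$ in the hypothesis are defined via the \emph{local} wonderful models of the tangent-cone arrangements $T\A_X$, one at each stratum, with independently chosen building sets $\G_X$; to run your global $Rj_*=j_!$ argument you must show that a single global SNC model can be chosen whose boundary meridians are all conjugate to these $\gamma_g$. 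This requires verifying that the local building sets can be taken compatibly (e.g.\ that the minimal local building sets arise by restricting a minimal global one, and that proving the result for minimal building sets suffices by monotonicity of the hypothesis). Until that matching is pinned down, your proof is incomplete, whereas the paper sidesteps the issue entirely because its spectral sequence is built stratum by stratum directly from the local data.
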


Consequently, the cohomology groups of $U$ with coefficients in a 
`generic' local system vanish in the range below $n$.

Finally, let $\ell_2 G$ denote the left $\C[G]$-module of 
complex-valued, square-summable functions on $G$, 
and let $\tensor[^\rd]{H}{^i}(U,\ell_2 G)$ be the reduced 
$L^2$-cohomology groups of $U$ with coefficients in this module, 
cf.~ \cite{Eck, Lueck}.  We then prove in Theorem \ref{thm:l2} 
the following result. 

\begin{theorem}
\label{thm:l2vanish}  
Let $U$ and $G=\pi_1(U)$ be as above.  Then 
$\tensor[^\rd]{H}{^i}(U,\ell_2 G)=0$ for all $i\neq n$.
\end{theorem}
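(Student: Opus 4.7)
The plan is to deduce Theorem \ref{thm:l2vanish} from Theorem \ref{thm:intro1} by importing its duality space conclusion into the $L^2$-cohomology framework of \cite{Eck, Lueck}. The key identification is that the reduced $L^2$-cohomology $\tensor[^\rd]{H}{^i}(U, \ell_2 G)$ is a Hilbert $\VN(G)$-module whose von Neumann dimension equals the $L^2$-Betti number $b_i^{(2)}(U)$; since a finitely generated Hilbert $\VN(G)$-module of dimension zero is trivial, it suffices to show $b_i^{(2)}(U)=0$ for $i\neq n$.

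Upper-degree vanishing is immediate: Theorem \ref{thm:intro1} shows that $U$ has the homotopy type of a finite CW complex of dimension at most $n$ (or one applies Andreotti--Frankel inductively over the Stein stratification). The cellular $\ell_2 G$-cochain complex vanishes above degree $n$, and so does $\tensor[^\rd]{H}{^i}(U,\ell_2 G)$ for $i>n$.

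For the lower-degree vanishing ($i<n$), my plan is to mimic the inductive Morse-theoretic argument that underlies Theorem \ref{thm:intro1}, but tracking von Neumann dimensions in place of $\Z G$-cohomology. That proof presumably proceeds by induction on the stratification of $\A$: at each stratum $X\in L(\A)$, the Stein complement hypothesis provides, via a plurisubharmonic Morse exhaustion with critical points of index at most the complex dimension, a local model with controlled cohomological dimension. Propagating through a stratified Mayer--Vietoris or Gysin spectral sequence coming from the wonderful compactification, and using the additivity of $\dim_{\VN(G)}$ on weakly exact sequences of Hilbert $\VN(G)$-modules, one obtains $\dim_{\VN(G)}\tensor[^\rd]{H}{^i}(U,\ell_2 G)=0$ for $i<n$. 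Triviality of the modules themselves then follows.

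The main obstacle is the subtle nature of reduced $L^2$-cohomology under the induction: long exact sequences for reduced $L^2$-cohomology are only weakly exact rather than exact in the classical sense, because the differentials in the $\ell_2 G$-cochain complex can fail to have closed image even when the corresponding $\Z G$-differentials are injective or surjective. Consequently, one cannot transfer the $\Z G$-vanishing of Theorem \ref{thm:intro1} to the $\ell_2 G$-setting by an abstract functorial change-of-coefficients argument; instead, one must work throughout with the von Neumann dimension function, using its additivity and continuity on chains of Hilbert $\VN(G)$-modules to push the vanishing of $b_i^{(2)}(U)$ through each step of the stratified induction, with the Stein local models providing the dimension-theoretic base case.
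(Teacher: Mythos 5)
Your overall instinct---that one must compute modulo modules of von Neumann dimension zero rather than attempt a naive change of coefficients from Theorem~\ref{thm:intro1}, and that the spectral sequence stratification should then be re-run for $\ell_2 G$---is the right one, and is essentially the framework the paper adopts (following \cite{DJL07}, by passing to the Serre quotient of $\VN(G)$-modules by the dimension-zero subcategory). However, there is a genuine gap in what you supply as the ``base case.'' You write that the vanishing at each stratum is provided by the Stein local models, but the Stein hypothesis only controls the compactly supported cohomology factors $H^{p+r(X)}_c(M(\A^X);-)$ in the spectral sequence \eqref{eq:ss arr}, i.e.\ it gives vanishing \emph{above} the target degree. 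The vanishing \emph{below} the target degree has nothing to do with the Stein property: in Theorem~\ref{thm:mcm} it comes from the MCM hypothesis on the coefficient module over the local free abelian subgroups $C_{S,X}$, and that is precisely the ingredient your proposal never establishes for $\ell_2(G)$.

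The paper's proof supplies exactly this missing step. The technical heart is equation \eqref{eq:L2 torus vanishing}: for each $C_{S,X}$ and each $i$, the reduced group cohomology $\tensor[^\rd]{H}{^i}(C_{S,X},\ell_2(G))$ vanishes in the quotient category. This is proved by an explicit orthogonality computation in the Hilbert space $\ell_2(G)$: decomposing $c\in\ell_2(G)$ over cosets of $\Z\cong\langle\gamma_1\rangle$ and using square-summability, one checks that $\gamma_1-1$ acts injectively and that $\ell_2(G)/\overline{(\gamma_1-1)\ell_2(G)}$ has von Neumann dimension zero. This yields a short exact sequence in $\cE$ whose long exact sequence shows multiplication by $\gamma_1-1$ is an isomorphism on $\tensor[^\rd]{H}{^i}(C_{S,X},\ell_2(G))$ in $\cE$; but $\gamma_1-1$ lies in the augmentation ideal, so it also acts by zero, forcing vanishing. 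Without this computation you do not have an $\ell_2$-analogue of the MCM condition, and the dimension-counting induction you describe cannot get started. Note also that Theorem~\ref{thm:intro1} is not logically upstream of Theorem~\ref{thm:l2vanish}: both are parallel applications of the same spectral sequence (Theorem~\ref{thm:ss-dsy}), with different verifications of the coefficient-module hypothesis, rather than one being deduced from the other.
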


In particular, the $\ell_2$-Betti numbers of $U$ are all zero except
in dimension $n$.  A basic fact about $\ell_2$-cohomology is that
$\ell_2$-Betti numbers compute the usual Euler characteristic (see, e.g.,
\cite[Thm.\ 3.6.1]{Eck}.)  Therefore, we see once again that 
$(-1)^n\chi(U)\geq 0$.

\subsection{Linear, elliptic, and toric arrangements}
\label{subsec:intro3}
The theory of hyperplane arrangements originates in the study of configuration
spaces and braid groups.  Here we consider a broader class of hypersurface
arrangements of current interest.

\begin{theorem}
\label{thm:intro2}
Suppose that $\A$ is one of the following:
\begin{enumerate}
\item \label{a1} 
An affine-linear arrangement in $\C^n$, or a hyperplane arrangement in $\CP^n$; 
\item \label{a2} 
A non-empty elliptic arrangement in $E^n$;
\item \label{a3} 
A toric arrangement in $(\C^*)^n$.
\end{enumerate}
Then the complement $M(\A)$ is both a duality space and an abelian duality
space of dimension $n-r$, $n+r$, and $n$, respectively, where $r$ is the
corank of the arrangement. 
\end{theorem}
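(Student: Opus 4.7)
The plan is to apply Theorem~\ref{thm:intro1} in each of the three cases, after first reducing to an essential arrangement.

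If $\A$ has corank $r$, there is a product decomposition $M(\A)\cong M(\A')\times F^r$, where $F=\C$, $E$, or $\C^*$ for the linear, elliptic, and toric cases respectively, and $\A'$ is an essential arrangement in a space of complex dimension $n-r$. (In the elliptic and toric settings, obtaining this decomposition may require passing to a finite \'etale cover along which the (abelian) duality property descends.) Since products of (abelian) duality spaces of dimensions $a$ and $b$ are again such of dimension $a+b$, and since $\C$, $E$, and $\C^*$ are (abelian) duality spaces of dimensions $0$, $2$, and $1$ respectively, it suffices to prove that $M(\A')$ is a duality and abelian duality space of dimension $n-r$.

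To apply Theorem~\ref{thm:intro1} to $U=M(\A')$, we construct a smooth compactification $Y$ of $U$ using wonderful model techniques. In the linear case, take the De Concini--Procesi wonderful model of $\CP^{n-r}$ with respect to $\A'\cup\{H_\infty\}$; in the elliptic case, form the analogous model inside the already-projective variety $E^{n-r}$; in the toric case, first embed $(\C^*)^{n-r}$ in a smooth projective toric compactification whose boundary meets the hypersurface closures transversely, then perform a wonderful blowup. In each case, the boundary divisor $D=Y\setminus U$ consists of smooth, irreducible hypersurfaces that intersect locally like hyperplanes, verifying~\eqref{c1}.

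The main work lies in verifying~\eqref{c2}: for each stratum $X\in L(\A)$, the complement of $\A|_X$ in $X$ must be empty or Stein. In the linear case, these relative complements are naturally complements of affine hyperplane arrangements in affine spaces, hence Stein. In the toric case, the relevant strata admit affine local models in which the induced subarrangement is cut out by character equations; the complement is therefore obtained by localization and remains affine, hence Stein. The elliptic case is the most delicate: one must show that the complement of an essential, non-empty elliptic subarrangement in an abelian variety is Stein. This can be done by choosing $n$ sufficiently independent hypersurfaces whose joint complement is a product of punctured elliptic curves (which is Stein as a product of open Riemann surfaces), and invoking the fact that removing further codimension-one analytic subvarieties from a Stein manifold preserves the Stein property. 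The principal obstacle is precisely this verification in the elliptic case: unlike the linear and toric cases, the ambient strata are not themselves affine, so the Stein property must be established by a careful inductive argument tracking how essentialness propagates through the stratification of the wonderful model and exploiting the specific geometry of abelian varieties.
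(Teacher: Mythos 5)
Your overall strategy (reduce to the essential case, then apply Theorem~\ref{thm:intro1}) is close to the paper's treatment of cases~\eqref{a1} and~\eqref{a2}, but it departs from the paper in the toric case, and there it leaves a genuine gap. The paper proves case~\eqref{a3} by invoking its Proposition~\ref{prop:toricCpct}, which singles out the De~Concini--Gaiffi projective wonderful model from \cite{DG16} and uses the special recursive structure of that specific construction to show that every relative complement $M(\A^X)$ is again a toric arrangement complement inside a subtorus, hence affine. Your substitute---embed $(\C^*)^{n-r}$ in \emph{some} smooth projective toric compactification whose boundary meets the hypersurface closures transversely, then perform a wonderful blowup, and conclude that the strata ``remain affine, hence Stein''---does not go through in general. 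As the paper emphasizes in Remark~\ref{rem:non-stein-bis}, complements of hypersurfaces in toric varieties need not be Stein, and a generic toric compactification (even with normal-crossings boundary) will produce boundary strata $X$ for which $M(\A^X)$ fails hypothesis~\eqref{c2}. Verifying that a suitable compactification exists is exactly the new input in the toric case; it cannot be asserted from transversality alone and requires either the \cite{DG16} construction or an equivalent.

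Two smaller points. First, your reduction via $M(\A)\cong M(\A')\times F^r$ and descent ``along a finite \'etale cover'' implicitly assume that the abelian duality property descends from a finite cover to the base; this is standard for duality groups (Bieri--Eckmann) but is not obvious for abelian duality, since $G^{\ab}$ changes drastically under passage to finite-index subgroups. The paper sidesteps this: in the toric case no essentialization is performed at all (Proposition~\ref{prop:toricCpct} applies to arbitrary toric arrangements and delivers dimension $n$ directly), and in the elliptic case the Stein property of the essential restrictions is cited from \cite[Prop.~6.1]{DSY1}. Second, you do not need global wonderful blowups of $\CP^n$ or $E^n$: the compactifications $Y$ used for cases~\eqref{a1} and~\eqref{a2} are $\CP^n$ and $E^n$ themselves, with $L(\A)$ the poset of connected intersections of the hypersurfaces; the De~Concini--Procesi wonderful models enter only locally, in the tangent spaces $T_xY$, to produce the subgroups $C_{S,X}$ feeding into Lemma~\ref{lem:inj}. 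Blowing up globally only adds more boundary strata on which hypothesis~\eqref{c2} would then need checking.
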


As mentioned previously, the first two statements already appeared 
in our paper \cite{DSY2}; at the time, however, we were unable to 
address the third one.  Since then, 
De Concini and Gaiffi \cite{DG16} have constructed a compactification 
for toric arrangements which is compatible with our approach.
The claim that the complement of a toric arrangement is a duality space
was first reported in \cite[Theorem 5.2]{DS13}.  
However, a serious gap appeared in the proof: see \cite{Da16}.  
Part of our motivation here, then, is to provide an independent alternative,
as well as a uniform proof of the three claims above.  The argument
is given in \S\ref{subsec:toric} and depends on Theorem~\ref{thm:intro1}.
It is worth noting that this dependency is slightly subtle.  
For example, the complement $U$ of a 
linear hyperplane arrangement is indeed a Stein manifold; however, if 
the corank is strictly positive, hypothesis (2) is not satisfied by
the intersection of all the hyperplanes.

As a consequence of Theorem~\ref{thm:intro2}, the characteristic varieties 
propagate for all linear, elliptic and toric arrangements.  The formality of 
linear and toric arrangement complements implies that their resonance 
varieties propagate, as well.  In the linear case, a more refined propagation
of resonance property was established by Budur in \cite{Bu11}.

If $\A$ is an affine complex arrangement, work of Kohno \cite{Ko86}, 
Esnault, Schechtman, Varchenko \cite{ESV}, and Schechtman, 
Terao, Varchenko \cite{STV} gives sufficient conditions for a 
local system $\L$ on $M(\A)$ to insure the vanishing of the 
cohomology groups $H^i(M(\A),\L)$ for all $i<\rank(\A)$.  
Similar conditions for the vanishing of cohomology of with 
coefficients in rank $1$ local systems were given by Levin and 
Varchenko \cite{LV} for elliptic arrangements, and by Esterov 
and Takeuchi \cite{ET} for certain toric hypersurface arrangements.  
In turn, we provide in Corollary \ref{cor:generic vanishing} a unified 
set of generic vanishing conditions for cohomology of  
local systems on complements of arrangements of 
smooth, complex algebraic hypersurfaces. 

The $\ell_2$-cohomology of the complement of a linear arrangement 
also vanishes outside of the middle (real) dimension: this is a result of 
Davis, Januszkiewicz and Leary~\cite{DJL07}.  The same claim for toric
arrangements appears in \cite{DS13}; however, the argument given there 
has the same gap mentioned above.  As part of our approach here,
we obtain the aforementioned vanishing result (Theorem~\ref{thm:l2}) 
for the $\ell_2$-cohomology of complements of hypersurface arrangements.  

\subsection{Orbit configuration spaces}
\label{subsec:intro4}

As a second application of our general results, we obtain an almost complete 
characterization of the duality and abelian duality properties of ordered 
orbit configuration spaces on Riemann surfaces.  In \S\ref{subsec:conf}, 
we define and discuss orbit configuration spaces, following \cite{Xi97}; 
for the purpose of this introduction, though, we remind the reader that 
the classical configuration spaces are recovered by taking $\Gamma$ 
to be the trivial group.

\begin{theorem}
\label{thm:intro3}  
Suppose $\Gamma$ is a finite group that acts freely 
on a Riemann surface $\Sigma_{g,k}$ of genus $g$ with $k$ punctures. 
Let $F_{\Gamma}(\Sigma_{g,k},n)$ be the orbit 
configuration space of $n$ ordered, disjoint $\Gamma$-orbits.  
\begin{enumerate}
\item If $k>0$, then
$F_{\Gamma}(\Sigma_{g,k},n)$ is both a duality space and an abelian duality
space of dimension $n$.
\item If $k=0$, then $F_{\Gamma}(\Sigma_g,n)$ is a duality space of dimension
$n+1$, provided $g\geq1$, and is an abelian duality space of dimension $n+1$ if
$g=1$.
\item If $k=0$, then $F(\Sigma_g,n)$ is neither a duality space nor an abelian
duality space if $g=0$, and it is not an abelian duality space if $g\geq2$.
\end{enumerate}
\end{theorem}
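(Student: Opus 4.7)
The plan is to treat each part in turn, leveraging Theorems~\ref{thm:intro1} and~\ref{thm:intro2} where possible, combined with the orbit Fadell-Neuwirth fibration for iterative constructions and cohomological obstructions (signed Euler characteristic and first Betti number) for the negative results. For part~(1), I realize $F_\Gamma(\Sigma_{g,k},n)$ as the complement in $Y=\Sigma_g^n$ of the arrangement $\A$ whose members are the translated diagonals $\Delta_{ij}^\gamma=\{x_i=\gamma x_j\}$ for $1\leq i<j\leq n$, $\gamma\in\Gamma$, together with the puncture hypersurfaces $\{x_i=q\}$ for each $i$ and $q\in\Sigma_g\setminus\Sigma_{g,k}$. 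Because $\Gamma$ acts freely, no two of these hypersurfaces share a tangent plane at any common point, so local holomorphic coordinates on the factors exhibit $\A$ as a local hyperplane arrangement, verifying condition~\eqref{c1}. Each stratum $X\in L(\A)$ is biholomorphic to a product $\Sigma_g^m$ (the free coordinates are those not forced to equal a $\Gamma$-translate of another), and the restriction of $\A$ to $X$ always includes, in each free coordinate, the original $k$ puncture hypersurfaces; hence $U\cap X$ embeds as an open subset of a product of non-compact Riemann surfaces with only finitely many additional hypersurfaces removed. By Behnke-Stein each factor is Stein, so the product is Stein, and removing the remaining diagonal hypersurfaces preserves this property, verifying~\eqref{c2}; Theorem~\ref{thm:intro1} then yields both duality properties in dimension~$n$.

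For part~(2), two subcases arise. When $g=1$, any finite free action on an elliptic curve $E$ must be by translations, so $F_\Gamma(E,n)$ is the complement in $E^n$ of the elliptic arrangement $\{x_i-x_j=t_\gamma\}$, whose common normals $e_i-e_j$ span an $(n-1)$-dimensional subspace of $\C^n$. The arrangement has corank $1$, and Theorem~\ref{thm:intro2}\eqref{a2} directly gives the duality properties in dimension $n+1$. When $g\geq 2$, I instead invoke the orbit Fadell-Neuwirth fibration
\[
F_\Gamma(\Sigma_g,n)\longrightarrow F_\Gamma(\Sigma_g,n-1)
\]
whose fiber is $\Sigma_g$ with $(n-1)|\Gamma|$ points removed, a $K(F,1)$ for a non-trivial free group $F$ and hence a duality space of dimension~$1$. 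Starting from the base case that $\Sigma_g$ is a closed aspherical $\PD_2$-space (duality dimension~$2$), Bieri-Eckmann multiplicativity of duality over fibrations of aspherical spaces~\cite{BE} gives by induction on $n$ that $F_\Gamma(\Sigma_g,n)$ is a duality space of dimension $n+1$.

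For part~(3), I use cohomological obstructions. When $g\geq 2$, Fadell-Neuwirth yields $\chi(F_\Gamma(\Sigma_g,n))=\prod_{k=0}^{n-1}(2-2g-k|\Gamma|)$, a product of $n$ strictly negative integers of sign $(-1)^n$; abelian duality of dimension $n+1$ would force the sign $(-1)^{n+1}$ via inequality~\eqref{eq:signedeuler}, a contradiction. When $g=0$, the group $H_1(F(S^2,n);\Z)$ is finite (the abelianized spherical braid group is torsion), so $b_1=0$; since an abelian duality space of positive dimension $d$ satisfies $b_1\geq d$, this rules out abelian duality. Ordinary duality is then ruled out case by case: for $n\leq 2$, $F(S^2,n)$ is homotopy equivalent to $S^2$, whose cohomology is non-vanishing in two different degrees; for $n\geq 3$, the universal-cover computation (via the Fadell-Neuwirth fibration and the standard cover of $\PGL_2(\C)$) shows that $H^*(F(S^2,n);\Z G)$ is non-vanishing in more than one degree. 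The main obstacle in this plan is the careful verification of condition~\eqref{c2} in part~(1), where one must unwind how each $\Gamma$-orbit coincidence propagates through the stratification $L(\A)$ and confirm that the Stein property survives all such restrictions, together with the case analysis required in the $g=0$ portion of part~(3).
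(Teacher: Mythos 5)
Your overall strategy tracks the paper's proof closely: realize $F_\Gamma(\Sigma_{g,k},n)$ as a hypersurface-arrangement complement and apply Theorem~\ref{thm:intro1} for part~(1), use Fadell--Neuwirth together with Bieri--Eckmann for part~(2), and cohomological obstructions for part~(3). The differences in parts~(1)--(2) are cosmetic: the paper organizes the Stein verification via a partition-poset description of $L(\A)$ (Propositions~\ref{prop:strata} and~\ref{prop:Sigma}) that is more systematic than your free-coordinate argument, and it uses the fibration $F_\Gamma(\Sigma_g,n)\to\Sigma_g$ with fiber $F_\Gamma(\Sigma_{g,\abs{\Gamma}},n-1)$ rather than the fibration over $F_\Gamma(\Sigma_g,n-1)$; the content is the same.

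There is, however, a genuine gap in your part~(3) argument for $g\geq2$. The inequality~\eqref{eq:signedeuler} says that an abelian duality space of dimension $d$ satisfies $(-1)^d\chi\geq0$ \emph{for that particular $d$}. Your Euler-characteristic computation shows $(-1)^n\chi(F(\Sigma_g,n))>0$, which only rules out abelian duality of dimension $n+1$; it leaves open the possibility of abelian duality of any dimension $d$ with $d\equiv n\pmod 2$, for instance $d=n$. To turn the sign of $\chi$ into a contradiction, you must first show that the only admissible abelian duality dimension is $n+1$. The paper does this in two steps: the Fadell--Neuwirth tower shows $F(\Sigma_g,n)$ is a CW-complex of dimension at most $n+1$, giving $d\leq n+1$; and the nonvanishing $b_3(F(\Sigma_g,2))=2g$ (Azam) together with $b_{n+1}(F(\Sigma_g,n))\neq0$ for $n\geq3$ (Maguire) force $d\geq n+1$. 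Only after pinning $d=n+1$ does the sign of $\chi$ give the contradiction. You should supply these two dimension bounds to close the gap; note also that these Betti-number inputs are stated for classical configuration spaces, which is why the paper restricts to trivial $\Gamma$ for $n\geq 2$ in this part of the argument (your Euler-characteristic product formula is valid for general $\Gamma$, but part~(3) of the theorem concerns $F(\Sigma_g,n)$ only).
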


Hence the characteristic varieties propagate for the 
orbit configuration spaces $F_{\Gamma}(\Sigma_{g,k},n)$, 
where either $k\geq1$, or $k=0$ and $g=1$, for any finite group 
$\Gamma$ acting freely on  $\Sigma_{g,k}$.

\section{Hypersurface arrangements}
\label{sect:arrs}

\subsection{Open covers for hypersurface arrangements}
\label{subsec:sub mfd}
Let $Y$ be a smooth, connected complex manifold, and let 
$\A=\set{W_1,\ldots, W_m}$ be a finite collection of smooth, connected, 
codimension-$1$ submanifolds of $Y$.  Let $D=\bigcup_{i=1}^m W_i$ 
be the corresponding divisor, and let $M(\A):=Y\mysetminus D$ be 
the {\em complement}\/ of the arrangement $\A$.

We will assume that the intersection of any subset of $\A$ 
is also a smooth manifold, and has only finitely many connected 
components. Moreover, we require that, for each point $y\in D$, 
there is a chart containing $y$ for which each element of the subcollection 
$\A_y:=\set{W_i\mid y\in W_i}$ is defined locally by a linear equation.
In other words, the hypersurfaces comprising $\A$ have intersections 
which, locally, are diffeomorphic to hyperplane arrangements.  
This definition is taken from Dupont~\cite{Du15}, though similar 
notions appear in the literature; in particular, we refer to the 
paper of Li~\cite{Li09}.  

Let $L(\A)$ denote the collection of all connected components of
intersections of zero or more of the hypersurfaces comprising $\A$.  
Then $L(\A)$ forms a finite poset under reverse inclusion, ranked by 
codimension.  We will write $X\leq Y$ if $X\supseteq Y$, and write 
$r(X)=\codim X$ for the 
rank function.  For every submanifold $X\in L(\A)$, let 
$\A_X=\set{W\in\A\mid X\subseteq W}$
be the {\em closed subarrangement}\/ associated to $X$, and let 
$\A^{X} = 
\set{W\cap X \mid W\in \A\mysetminus \A_X}
$
be the {\em restriction}\/ of $\A$ to $X$. 
We write 
\begin{equation}
\label{eq:dx}
D_X=\bigcup_{Z\in L(\A) : {Z<X}} Z.
\end{equation}
Then the complement of the restriction of $\A$ to $X$ is $M(\A^X):=
X\mysetminus D_X$, for each $X\in L(\A)$.
Finally, let $T\A_X$ be the hyperplane
arrangement in the tangent space to $Y$ at 
a point in the relative interior of $X$, guaranteed by our hypothesis on the 
intersection of hypersurfaces.

One of the main tools we will need in this note is a 
spectral sequence developed in \cite{DSY1}, which we 
summarize in the next theorem.

\begin{theorem}[\cite{DSY1}]
\label{thm:ss-dsy}
Let $\A$ be an arrangement of hypersurfaces in a compact, smooth manifold $Y$.
Let $M$ be the complement of the arrangement, 
and let $\F$ be a locally constant sheaf on $M$.  
There is then a spectral sequence with
\begin{equation}
\label{eq:ss arr}
E_2^{pq}=\prod_{X\in L(\A)} H^{p+r(X)}_c(M(\A^X);
H^{q-r(X)}(M(T\A_X),\F_X)),
\end{equation}
converging to $H^{p+q}(M,\F)$, where
$\F_X$ is the corresponding restriction of $\F$ to $M(T\A_X)$.
\end{theorem}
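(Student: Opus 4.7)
The plan is to derive \eqref{eq:ss arr} from a stratification spectral sequence for the open inclusion $j\colon M\hookrightarrow Y$. Since $Y$ is compact, $H^*(M,\F)=H^*(Y,Rj_*\F)$, so the task reduces to computing $Rj_*\F$ along the stratification of $Y$ by the locally closed strata $M(\A^X)$ and then assembling.

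First, I would identify the restriction of $Rj_*\F$ to each stratum. The local-hyperplane condition produces, near any point $y\in M(\A^X)$, a chart in which the germ of $\A$ splits as a product of a disk in $X$ with the central arrangement $T\A_X$ in a transverse disk. A deleted tubular neighborhood of $M(\A^X)$ inside $M$ therefore fibers locally over $M(\A^X)$ with fiber the complement $M(T\A_X)$. Consequently, $(R^q j_*\F)|_{M(\A^X)}$ is the locally constant sheaf $\HH^q_X$ with stalk $H^q(M(T\A_X),\F_X)$; the local model globalizes because the transverse disk is intrinsically the normal space to $X$ at a point, on which $T\A_X$ is canonical.

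Next, I would filter $Y$ by the closed skeleta $Y_{\ge p}=\bigcup_{r(X)\ge p}X$ and run the associated spectral sequence converging to $H^*(Y,Rj_*\F)$. Its $E_1$-page splits as a direct sum over $X\in L(\A)$ of local cohomology groups supported along $M(\A^X)$ with coefficients in the sheaves $\HH^{\bullet}_X$. A Thom--Gysin identification on the complex submanifold $X$ rewrites each summand as compactly supported cohomology of $M(\A^X)$, shifted by the codimension, with coefficients in $\HH^{\bullet}_X$. Distributing the total degree shift between base and fiber — the key point being that the stalk $H^q(M(T\A_X),\F_X)$ itself carries a complex-manifold duality on $\C^{r(X)}$ — yields exactly $H^{p+r(X)}_c(M(\A^X);H^{q-r(X)}(M(T\A_X),\F_X))$, and hence \eqref{eq:ss arr}.

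The main obstacle is the bookkeeping of the codimension shifts and their distribution between the base and fiber degrees: the naive Thom shift for the complex submanifold $X\subset Y$ is the real codimension $2r(X)$, whereas \eqref{eq:ss arr} splits this symmetrically into an $r(X)$ increment on the base cohomology degree and a compensating $r(X)$ decrement on the coefficient cohomology degree. A cleaner alternative, should the direct bookkeeping become unwieldy, is to pass first to a wonderful compactification $\pi\colon\widetilde Y\to Y$ with a simple normal crossings boundary, apply Deligne's weight-filtration spectral sequence to $\widetilde Y\mysetminus\pi^{-1}(D)$, and group the resulting $E_1$-terms by their image in $L(\A)$, using Leray for each projection from the exceptional locus $\pi^{-1}(M(\A^X))\to M(\A^X)$.
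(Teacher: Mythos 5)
The paper does not actually prove this statement: the remark immediately following it explains that it is quoted verbatim (up to a change of indexing) from \cite[Cor.~3.3]{DSY1}, where it is established by a Mayer--Vietoris-type spectral sequence for a ``combinatorial cover'' of $M$ --- an open cover of $M$ equipped with a rank-compatible map to $L(\A)$ --- rather than by a sheaf-theoretic stratification argument on $Y$. So your route is genuinely different from the one the paper relies on, and this makes it worth examining carefully.

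There are two real gaps in your argument. First, the identification $i_X^!\,Rj_*\F\cong i_X^*Rj_*\F[-2r(X)]$ that you invoke to pass from local cohomology along $M(\A^X)$ to compactly supported cohomology is \emph{false} here, because $Rj_*\F$ is not locally constant near $M(\A^X)$. In fact $i_X^!Rj_*\F=0$ whenever $r(X)>0$: in the transverse slice $\C^{r}\supset M(T\A_X)$, the complement lies entirely inside $\C^r\mysetminus\{0\}$, so the restriction map
\begin{equation}
R\Gamma\bigl(\C^r,\,R\tilde\jmath_*\F_X\bigr)\longrightarrow
R\Gamma\bigl(\C^r\mysetminus\{0\},\,R\tilde\jmath_*\F_X\bigr)
\end{equation}
is an isomorphism (both sides compute $H^\hdot(M(T\A_X),\F_X)$), forcing $R\Gamma_{\{0\}}=0$. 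Already for $Y=\C$, $\A=\{0\}$, the local cohomology of $Rj_*\Q$ at the origin vanishes, so the filtration-by-support spectral sequence collapses to the open stratum alone and does not produce the nonzero terms that \eqref{eq:ss arr} attributes to the stratum $\{0\}$.

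Second, even if one switches to the dual ``extension-by-zero'' filtration (whose graded pieces are $i_{X!}\,i_X^*Rj_*\F$, giving $E_1$ terms $H^{\hdot}_c(M(\A^X),\,i_X^*Rj_*\F)$), the coefficients are a \emph{complex} on $M(\A^X)$, not the individual local systems $\HH^q_X$. Your statement ``with coefficients in the sheaves $\HH^{\bullet}_X$'' silently replaces $i_X^*Rj_*\F$ by $\bigoplus_q\HH^q_X[-q]$, which requires a non-trivial splitting (formality) and is not a consequence of the local product structure. This is also precisely where the codimension-shift bookkeeping you flag as ``the main obstacle'' lives: the filtration index in your construction is $r(X)$ itself, and converting that to the shear $(p+r(X),\,q-r(X))$ of \eqref{eq:ss arr} amounts to re-reading a filtered complex with complex coefficients as an honest double complex --- exactly the splitting you have not supplied. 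The appeal to ``complex-manifold duality on $\C^{r(X)}$'' does not address this. The DSY1 construction avoids both problems by never passing through $Rj_*\F$ on $Y$: the Mayer--Vietoris spectral sequence of the combinatorial cover has intersections that \emph{already} retract onto products $M(\A^X)_{\mathrm{loc}}\times M(T\A_X)$, so the coefficient decomposition and the degree shear appear directly at the level of the cover rather than being extracted from a constructible complex.
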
  
\begin{remark}
This is a special case of \cite[Cor.~3.3]{DSY1}.  The indexing differs
slightly since \cite{DSY1} indexes by dimension, rather than codimension.
\end{remark}

\subsection{Wonderful compactifications}
\label{subsec:wonder}

From here on, we will consider arrangements $\A$ of smooth, algebraic 
hypersurfaces in a smooth, connected complex projective variety $Y$.
For each $x\in Y$, there is a linear hyperplane arrangement $T\A_X$ in
the complex vector space $V=T_xY$ tangent to $\A_X$, 
where 
\begin{equation}
\label{eq:xx}
X=\bigcap_{x\in Z\in L(\A)}Z.
\end{equation}

We apply De Concini and Procesi's construction of the wonderful 
model of a subspace arrangement \cite{DP95} to the linear arrangement 
$T\A_X$ inside the affine space $V$.  The construction blows up the 
arrangement to one with simple
normal crossings; let $p\colon \widetilde{V}\to V$ denote the blowup.
The (total) divisor components are indexed by a `building set' $\G_X$.
A subset $S\subseteq\G_X$ indexing divisor components that have non-empty
intersection is called a `nested set,' and the collection of all nested sets 
forms a simplicial complex, called the nested set complex.  
The nested set complex $\N(T\A_X)$ depends on the choice of
building set; however, we will assume a fixed choice is made for each $X$
and omit the building set from the notation, since the choices are not
important to what follows.

\subsection{An injectivity result}%
\label{subsec:inj}
We recall from \cite{DSY1} that the fundamental group of the complement 
of a linear arrangement contains certain distinguished  free 
abelian subgroups of finite rank.  We review the definition of 
these subgroups briefly, referring to \cite{DSY1,Fe05} for 
unexplained terminology.

We consider the wonderful model for $T\A_X$ in the tangent space 
$V=V_x$ for a point $x$ in the relative interior of $X$.  
For a nested set $S\in \N(T\A_X)$ of size $r$, 
let $D_S$ denote the corresponding intersection of $r$
divisor components in $\widetilde{V}$.  For a point $z$ in the relative
interior of $D_S$, let $\DD_z$ be a sufficiently small closed polydisc in 
$\widetilde{V}$ centered at $z$.  Then 
$U_S:=\DD_z\cap M(T\A_X)\simeq (S^1)^r$.
It is shown in \cite[Thm.~4.6]{DSY1} that the inclusion 
$f_{X,S}\colon U_S\hookrightarrow M(T\A_X)$ induces an injective map of 
fundamental groups, $(f_{X,S})_{\sharp}\colon \pi_1(U_S,z_{X,S})
\hookrightarrow \pi_1(M(T\A_X),z_{X,S})$.   
Let $C_S$ denote the image of this homomorphism.  This 
is  a free abelian subgroup of
$G_X:=\pi_1(M(T\A_X))$ of rank $r$, well-defined up to conjugacy.

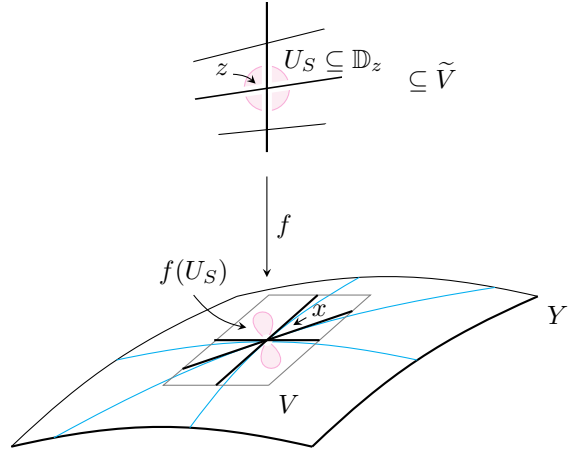
\begin{figure}
\begin{tikzpicture}[vertex/.style={circle,draw,inner sep=1pt,fill=black}]
% arrangement in Y
\coordinate (SW) at (-2,-2);
\coordinate (SE) at (2,-2);
\coordinate (NW) at (1,0);
\coordinate (NE) at (5,0);
\draw[thick] (SW) to [bend left=13] coordinate[pos=0.13] (SL) 
coordinate[pos=0.6] (S) (SE);
\draw (NW) to [bend left=13] coordinate[pos=0.87] (NR) 
coordinate[pos=0.4] (N) (NE);
\draw (SW) to [bend left=10] coordinate[pos=0.5] (W) (NW);
\draw[thick] (SE) to [bend left=10] coordinate[pos=0.5] (E) (NE);
% hypersurfaces
\draw[cyan] (S) to [bend left=12] (N);
\draw[cyan] (W) to [bend left=12] (E);
\draw[cyan] (SL) to [bend left=10] coordinate[pos=0.5] (Z) (NR);
% tangent plane centred at (Z)
\coordinate (VNE) at ($(Z)+(0.7,0)+0.3*(S)-0.3*(N)$);
\coordinate (VNW) at ($(Z)+(0.7,0)-0.3*(S)+0.3*(N)$);
\coordinate (VSE) at ($(Z)-(0.7,0)+0.3*(S)-0.3*(N)$);
\coordinate (VSW) at ($(Z)-(0.65,0)-0.3*(S)+0.3*(N)$);
\draw[gray] (VNE) -- (VNW) -- (VSW) -- (VSE) -- (VNE);
% image of $U_S$
\filldraw[magenta,fill=magenta!30!white,opacity=0.3] (Z) .. controls
($(Z)-0.3*(S)+0.17*(N)$) and ($(Z)-0.27*(S)-0.17*(N)$) .. (Z);
\filldraw[magenta,fill=magenta!30!white,opacity=0.3] (Z) .. controls
($(Z)+0.3*(S)-0.17*(N)$) and ($(Z)+0.27*(S)+0.17*(N)$) .. (Z);
\draw[white,fill=white] (Z) circle (0.07);
% tangent hyperplanes
\draw[thick] ($(Z)-(0.7,0)$) to ($(Z)+(0.7,0)$);
\draw[thick] ($(Z)-0.3*(S)+0.3*(N)$) to ($(Z)+0.3*(S)-0.3*(N)$);
%\draw[thick] ($(Z)-0.2*(SL)+0.2*(NR)$) to ($(Z)+0.2*(SL)-0.2*(NR)$);
\coordinate (TNR) at ($(Z)+(NR)-(SL)$);
\coordinate (TSL) at ($(Z)-(NR)+(SL)$);
%\draw (TNR) -- (TSL);
\draw[thick] (intersection of TSL--TNR and VNE--VNW) to
             (intersection of TSL--TNR and VSE--VSW);
% blowup above
\begin{scope}
\def\dy{3.5}
\clip ($(Z)+(0,\dy)$) circle (1);
\coordinate (ES) at ($(Z)+(0,-1+\dy)$);
\coordinate (EN) at ($(Z)+(0,1+\dy)$);
\coordinate (TS) at ($(Z)+(-1,-0.8+\dy)$);
\coordinate (TN) at ($(Z)+(1,-0.6+\dy)$);
\coordinate (TW) at ($(Z)+(-1,-0.3+\dy)$);
\coordinate (TE) at ($(Z)+(1,0+\dy)$);
\coordinate (TSL) at ($(Z)+(-1,0.2+\dy)$);
\coordinate (TNR) at ($(Z)+(1,0.7+\dy)$);
%
% open set goes first 
%
\coordinate (X) at (intersection of ES--EN and TE--TW);
\draw[magenta,fill=magenta!30!white,opacity=0.3] (X) circle (0.3);
\draw[ultra thick,white,double=black] (ES) to (EN);
\draw[ultra thick,white,double=black] (TE) to (TW);
%\node[circle,radius=0.1,fill=black] at (X);
%
\draw[thick] (ES) to (EN);
\draw (TS) to (TN);
\draw (TSL) to (TNR);
\end{scope}
% annotations
\node[above right] at ($(X)+(0.1,0.1)$) {$U_S\subseteq \DD_z$};
\coordinate (ZZ) at ($(X)+(-0.6,0.25)$);
\node at (ZZ) {$z$};
\path[-stealth] ($(X)!0.35!(Z)$) edge node [right] {$f$} ($(X)!0.75!(Z)$);
\node[below right] at (NE) {$Y$};
\node[right] at ($(TE)+(0.75,0)$) {$\subseteq \widetilde{V}$};
\node[below right] at (VNE) {$V$};
\coordinate (XX) at ($(Z)+(0.7,0.4)$);
\node at (XX) {$x$};
\draw[-stealth] ($(ZZ)!0.25!(X)$) to [bend left=20] ($(ZZ)!0.75!(X)$);
\draw[-stealth] ($(XX)!0.25!(Z)$) to  ($(XX)!0.5!(Z)$);
\coordinate (fU) at ($(VSW)-(1,0)$);
\node[above] at (fU) {$f(U_S)$};
\draw[-stealth] (fU) to [bend right=20] ($(fU)!0.72!(Z)$);
\end{tikzpicture}
\label{fig:ball in blowup}
\caption{Small tori in $M(\A)$}
\end{figure}

Our goal now is to show that $C_S$ also injects in the fundamental group of
the hypersurface complement.  
Let $\DD$ be a closed polydisc in $V=V_x$ centered at the origin, and set 
$V_X:=\DD\mysetminus \bigcup_{W\in\A_X}T_{x}W\cap\DD$.  
Let $h$ denote the diffeomorphism that trivializes $TY|_{\DD}$,
and let $\DD_x$ be the image of $\DD$ under $h$.  Let $U_X=h(V_X)$;
then $U_X=\DD_x\mysetminus \bigcup_{W\in\A_X}W\cap\DD_x$.

Consider the diagram
\begin{equation}
\label{eq:cd}
\begin{gathered}
\begin{tikzpicture}
\matrix (m) [matrix of math nodes,row sep=3em,column sep=3em,minimum width=2em]
{
U_S &  M(T\A_X) & V_X & 
U_X & M(\A) \phantom{\, .}\\
& & & \DD & Y \, . \\ };   
\path[right hook-stealth] 
(m-1-1) edge node [above] {$f_{X,S}$} (m-1-2);
\path[-stealth]
(m-1-2) edge node [above] {$g$} (m-1-3);
\path[-stealth]
(m-1-3) edge node [above] {$h$} (m-1-4);
\path[right hook-stealth]
(m-1-4) edge  (m-1-5)
edge  (m-2-4)
(m-1-5) edge node [right] {$j$} (m-2-5)
(m-2-4) edge node [above] {$i$} (m-2-5);
\end{tikzpicture}
\end{gathered}
\end{equation}

Here, $g$ is the usual deformation retraction of a central, 
linear arrangement complement, and the 
composite of the maps on the top row, $U_S\to M(\A)$, 
induces a homomorphism $\alpha_{X,S}\colon\pi_1(U_S)\to \pi_1(M(\A))$. 

Our argument for the injectivity of this map uses some rational 
homotopy theory.  We start with a technical lemma.  A connected 
$\cdga$ $(A,d)$ is said to be \emph{$1$-minimal}\/ 
if $A=\bwedge V$, the free $\cdga$ on a vector space $V$ 
concentrated in degree $1$, and $V$  is the union of an increasing 
filtration by subspaces $\{V_i\}_{i\ge 0}$ such that  $d=0$ on $V_0$ 
and $d(V_{i+1})\subset \bwedge V_i$.  (The differential $d$ is then 
decomposable, and thus $A$ is a minimal Sullivan algebra in the 
sense of \cite{FHT}.)

\begin{lemma}
\label{lem:surj}
Let $A$ be a $1$-minimal $\cdga$, and let $\varphi, \psi\colon A\to B$ 
be two homotopic $\cdga$ morphisms.  
Suppose $d_B=0$ and  $\varphi^1\colon A^1\to B^1$ is surjective.  
Then $\psi^1\colon A^1\to B^1$  is also surjective.  
\end{lemma}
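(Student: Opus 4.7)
The plan is to upgrade the cdga homotopy $\varphi \simeq \psi$ into a chain-homotopy-type formula on the generating subspace $V$ of $A$.  Since $A = \bwedge V$ is a minimal Sullivan algebra in the sense of~\cite{FHT}, and hence cofibrant, standard rational homotopy theory produces a $(\varphi,\psi)$-derivation $h \colon A \to B$ of degree $-1$ satisfying
\[
\psi - \varphi \;=\; d_B \circ h + h \circ d_A.
\]
Invoking the hypothesis $d_B = 0$, on $V = A^1$ this reduces to $\psi(v) - \varphi(v) = h(d_A v)$ for all $v \in V$.

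With this identity in hand, it suffices to prove the inclusion $\im \varphi^1 \subseteq \im \psi^1$, since surjectivity of $\varphi^1$ then forces surjectivity of $\psi^1$.  I propose to verify the stronger statement $\varphi^1(V_i) \subseteq \im \psi^1$ by induction on $i \ge 0$.  The base case is immediate: $d_A$ vanishes on $V_0$, so $\varphi$ and $\psi$ agree there.  For the inductive step, take $v \in V_{i+1}$; the condition $d_A(V_{i+1}) \subseteq \bwedge V_i$ together with degree considerations places $d_A v$ in $\bwedge^2 V_i$, so $d_A v$ is a sum of products $w \wedge w'$ with $w, w' \in V_i \subseteq A^1$.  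The derivation property of $h$ then yields
\[
h(w \wedge w') \;=\; h(w)\,\psi(w') \;-\; \varphi(w)\,h(w'),
\]
where $h(w), h(w') \in B^0 = \k$ are scalars.  By the inductive hypothesis the terms involving $\varphi(w)$ lie in $\im \psi^1$, and the terms involving $\psi(w')$ trivially do, so $h(d_A v) \in \im \psi^1$.  Therefore $\varphi(v) = \psi(v) - h(d_A v)$ lies in $\im \psi^1$, completing the induction.

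The main obstacle is the first step: extracting a $(\varphi,\psi)$-derivation homotopy from an abstract cdga homotopy.  This is a well-known feature of minimal Sullivan algebras, but one must be careful that the homotopy produced satisfies both the chain-homotopy identity and the derivation rule on products in $A$; for this, cofibrancy of $A$ is essential.  Once this structural input is secured, the filtration-indexed induction above is routine, and the surjectivity of $\psi^1$ follows at once.
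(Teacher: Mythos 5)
Your approach differs from the paper's and has a gap exactly at the step you flag yourself. The paper's proof avoids chain-level homotopy manipulation: it supposes $\psi^1$ is not surjective, projects $B$ onto the square-zero cdga $C:=\k\oplus B^1/\im(\psi^1)$ (zero in degrees $\geq 2$, zero differential), observes that $\pr\circ\psi$ vanishes in positive degree so that $\pr\circ\varphi$ is null-homotopic, and invokes the fact from \cite[Example~1, p.~151]{FHT} that a null-homotopic cdga morphism out of a minimal Sullivan algebra is constant --- contradicting the surjectivity of $\varphi^1$.

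The gap in your proposal: a Sullivan homotopy $H\colon A\to B\otimes\Lambda(t,dt)$ does not directly yield a $(\varphi,\psi)$-derivation chain homotopy, and this is not a ``standard'' fact. Writing $H(a)=P(a)+Q(a)\,dt$ with $P,Q\colon A\to B[t]$, the multiplicativity of $H$ gives $P(ab)=P(a)P(b)$ and $Q(ab)=P(a)Q(b)+(-1)^{|b|}Q(a)P(b)$: thus $Q$ is a derivation relative to the entire $t$-parametrized family $P$ of morphisms interpolating between $\varphi$ and $\psi$, not relative to the two endpoints. Integration $h(a)=\int_0^1 Q(a)\,dt$ does give the chain homotopy $\psi-\varphi=h\circ d_A$ (since $d_B=0$ forces $\partial_t P(a)=Q(d_Aa)$), but $h(ww')=\int_0^1\bigl(P(w)Q(w')-Q(w)P(w')\bigr)\,dt$ is an integral of a product of $t$-dependent factors and is \emph{not} $h(w)\psi(w')-\varphi(w)h(w')$, since the non-constant polynomials $P(w),P(w')$ cannot be pulled out of the integral. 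You would need to either prove that a genuine $(\varphi,\psi)$-derivation homotopy exists (not at all immediate, and you give no reference) or argue differently. The filtration induction can in fact be repaired without that claim: strengthen the inductive hypothesis to say that for $v\in V_i$ every coefficient of $P(v)\in B^1[t]$ lies in $\im\psi^1$; the base case uses $Q(d_Av)=0$ so $P(v)$ is constant and equal to $\psi(v)$, and the inductive step uses the product rule for $Q$, the fact that $Q(w)\in\k[t]$ for $w\in A^1$ (which requires $B^0=\k$, a hypothesis you use tacitly when you call $h(w)$ a scalar), and $P(v)(1)=\psi(v)\in\im\psi^1$. But that is a different and more delicate argument than what you wrote, and it is still longer than the paper's projection trick.
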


\begin{proof}
Suppose $\psi^1$ is not surjective. Let us view $C:=\k\oplus B^1/\im(\psi^1)$
as a (connected) $\cdga$ with zero differential, and let $\pr \colon B \to C$ be the 
projection map, which sends the graded ideal generated by $\im (\psi^1)$ to zero.
Clearly, the $\cdga$ maps $\Phi=\pr\circ \varphi$ 
and $\Psi= \pr\circ \psi$ are homotopic, 
and $\Psi=0$; thus, $\Phi$ is null-homotopic.  By \cite[Example~1, p.~151]{FHT}, 
the map $\Phi$ is constant: that is, zero in positive degree. This 
implies that $\varphi^1$ is not surjective, a contradiction.
\end{proof}

The complement of an arrangement of hypersurfaces $\A$ in a smooth, 
projective variety $Y$ has a rational $\cdga$ model, given by the $E_2$ 
page of the Leray spectral sequence of the inclusion $j\colon M(\A)\to Y$. 
In the normal crossings case, this is the classical Morgan model; for 
configuration spaces it was used by Totaro \cite{To96}; in our more 
general context, we refer to Dupont~\cite{Du15} and Bibby~\cite{Bi16}.  
We will denote this model for $M(\A)$ by $B(\A)$.

\begin{lemma}
\label{lem:inj}
Let $\A$ be an arrangement of smooth, complex subvarieties 
in a smooth, complex projective variety $Y$. Then, for every 
$X\in L(\A)$ and every nested set $S\in \N(T\A_X)$, the 
homomorphism $\alpha_{X,S}\colon\pi_1(U_S)\to \pi_1(M(\A))$ 
is injective.
\end{lemma}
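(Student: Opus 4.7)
The plan is to translate the statement into a rational-homotopy question and reduce it to the injectivity already known for the linearized arrangement $T\A_X$ by \cite[Thm.~4.6]{DSY1}. Since $\pi_1(U_S)\cong\Z^r$ is finitely generated, torsion-free, and abelian, $\alpha_{X,S}$ is injective if and only if the induced map $H_1(U_S;\Q)\to H_1(M(\A);\Q)$ is injective; dually, it suffices to show that $H^1(M(\A);\Q)\to H^1(U_S;\Q)$ is surjective. Using the Dupont--Bibby $\cdga$ model $B(\A)$ for $M(\A)$ from \cite{Du15,Bi16} and a $1$-minimal model quasi-isomorphism $A\to B(\A)$, this reduces further to showing that any $\cdga$ lift $\psi\colon A\to B:=H^*(U_S;\Q)=\bwedge(\Q^r)$ of $\alpha_{X,S}$ has surjective degree-$1$ part; here the target carries zero differential because $U_S\simeq(S^1)^r$ is formal, which puts us in the hypothesis of Lemma~\ref{lem:surj}.

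To invoke that lemma, I will exhibit a $\cdga$-homotopic morphism $\varphi\colon A\to B$ with $\varphi^1$ surjective. From diagram~\eqref{eq:cd}, the map $\alpha_{X,S}$ factors, on spaces and up to homotopy, as $U_S\xrightarrow{f_{X,S}} M(T\A_X)\xleftarrow{\,\simeq\,} U_X\hookrightarrow M(\A)$. Lifting this factorization to $\cdga$ models, one obtains a morphism $B(\A)\to H^*(M(T\A_X);\Q)$ from the restriction of Dupont's model to a tubular neighborhood of $X$, where the local structure is controlled by the linear arrangement $T\A_X$ and the target is the Brieskorn/Orlik--Solomon algebra (a $\cdga$ model for the formal space $M(T\A_X)$). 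Composing with the cohomology restriction $H^*(M(T\A_X);\Q)\to H^*(U_S;\Q)$ and precomposing with $A\to B(\A)$ defines $\varphi$. Since $\varphi$ and $\psi$ are both Sullivan lifts of the same geometric map, they are $\cdga$-homotopic.

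By \cite[Thm.~4.6]{DSY1}, the map $(f_{X,S})_{\sharp}$ is injective on fundamental groups; since $\pi_1(U_S)$ is free abelian, this gives surjectivity of $H^1(M(T\A_X);\Q)\to H^1(U_S;\Q)$, and formality of $M(T\A_X)$ lifts this surjectivity to the degree-$1$ part of the $1$-minimal model, so $\varphi^1$ is surjective. Lemma~\ref{lem:surj}, applied to the pair $(\varphi,\psi)$ with $d_B=0$, then yields that $\psi^1$ is surjective, completing the proof.

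The main obstacle is the $\cdga$-level factorization asserted in the second paragraph: verifying that the Dupont--Bibby model $B(\A)$ fits into a homotopy-commutative square of $\cdga$ morphisms realizing the geometric factorization $U_S\to M(T\A_X)\xleftarrow{\,\simeq\,} U_X\hookrightarrow M(\A)$. This local-to-global compatibility of rational models is the technical heart of the argument, but is made plausible by the structure of Dupont's construction, whose local pieces at a stratum $X$ are precisely the Orlik--Solomon algebras of the linearized arrangement $T\A_X$.
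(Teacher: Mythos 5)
Your proposal follows the same broad strategy as the paper: build the explicit composite $p=f_{X,S}^*\circ i^*\colon B(\A)\to\bwedge(\Q^r)$, precompose with the $1$-minimal model map $\pi$ to get $\varphi=p\circ\pi$ with surjective degree-$1$ part, identify the geometric realization $\psi$, argue $\varphi\simeq\psi$, and appeal to Lemma~\ref{lem:surj}. You have also correctly identified the homotopy $\varphi\simeq\psi$ as the delicate point; the paper handles it by observing that $\psi$ factors through a lift $\pi'\colon\M(B(\A))\to B(\A)$ which, being another classifying map for the $1$-minimal model, is homotopic to $\pi$.

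However, two steps as written are faulty, though fixable. First, your justification of surjectivity of $H^1(M(T\A_X);\Q)\to H^1(U_S;\Q)$ does not work: injectivity of $(f_{X,S})_\sharp$ together with the domain being free abelian does \emph{not} imply surjectivity on $H^1$ (already $\Z\hookrightarrow F_2$, $1\mapsto[a,b]$, is an injection with zero map on $H^1$). The correct reason is the explicit statement that $f_{X,S}^*\colon\OS_X\to H^\hdot((\C^*)^r,\Q)$ is surjective in degree~1, from \cite[Prop.~2]{FY04} or the proof of \cite[Cor.~4.7]{DSY1} (which exploits the nested-set combinatorics and the De Concini--Procesi blowup, not merely $\pi_1$-injectivity). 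Second, there is a mismatch in the final step. Your reduction is to surjectivity of $H^1(\alpha_{X,S};\Q)$, i.e.\ $H^1(\psi)=\psi^1|_{V_0}$ where $V_0=H^1(\M(B(\A)))$ is the cocycle part of the degree-$1$ space of the $1$-minimal model. Lemma~\ref{lem:surj} yields only that $\psi^1\colon V\to\Q^r$ is surjective on the \emph{whole} degree-$1$ piece $V\supsetneq V_0$, which is strictly weaker (e.g.\ $\bwedge(x,y,z)$ with $dz=xy$ and $\psi$ sending $x,y\mapsto 0$, $z\mapsto$ generator, has $\psi^1$ surjective but $H^1(\psi)=0$). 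Either finish as the paper does, reading $\psi^1$-surjectivity dually as injectivity of $\m(\alpha_{X,S})\colon\m(\Z^r)\to\m(G)$ and invoking the embedding $\Z^r\hookrightarrow(\Z^r)_\Q$; or, if you insist on the $H^1$ reduction, skip Lemma~\ref{lem:surj} entirely and use that homotopic $\cdga$ maps induce equal maps on cohomology, so $H^1(\psi)=H^1(\varphi)$, which is surjective by your construction. (Also, your ``if and only if'' in the first paragraph has a false forward direction—$\alpha_{X,S}$ injective need not force injectivity of the rational $H_1$ map—but you only use the correct reverse implication, so this is a cosmetic issue.)
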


\begin{proof}
Restriction along the map $i\colon \DD\to Y$ 
gives a map of sheaves $\Q\to i_*\Q$ on $M(\A)$,
which in turn gives a map between the $E_2$-pages of the 
respective Leray spectral sequences, 
\begin{equation}
\label{eq:istar}
\xymatrixcolsep{20pt}
\xymatrix{i^*\colon H^p(Y,R^qj_*\Q)\ar[r]& H^p(\DD,R^qj_*\Q)}.
\end{equation}

We note that, for each $q\geq0$, the
sheaf $R^qj_*\Q$ is the direct image of a constant sheaf
on a contractible space, cf.~\cite[Lemma~3.1]{Bi16}; it follows that 
$H^p(\DD,R^qj_*\Q)=H^q(U_X,\Q)$ for $p=0$, and is zero 
otherwise.  Thus, the $E_2$-page on the right side of \eqref{eq:istar} 
is a $\cdga$ with zero differential, isomorphic to the Orlik--Solomon algebra 
$\OS_X:=H^\hdot(M(T\A_X),\Q)$, and a rational
model for $U_X$.

Now recall from \cite{Du15} that the $\cdga$ $B(\A)$ contains distinguished 
elements in bidegree $(0,1)$, which we 
denote by $\set{e_W\mid W\in\A}$, that restrict locally to logarithmic
$1$-forms around the components of $\A$ in $Y$.
Since their images $i^*(e_W)$ for $W\in\A_X$ are the 
standard generators of $\OS_X$, the map $i^*$ is surjective. 
Recall also that we identified the group $\pi_1(U_S)$ with $\Z^r$, 
for some $r>0$.  By \cite[Proposition 2]{FY04}, or the proof of 
\cite[Corollary 4.7]{DSY1}, the map $f^*_{X,S}\colon 
\OS_X\to H^\hdot((\C^*)^{r},\Q)$ is surjective in degree $1$.  
Hence, the composite $p=f_{X,S}^*\circ i^*\colon B(\A)\to \bwedge(\Q^{r})$ 
is a surjective map of $\cdga$s, where the differential in $\bwedge(\Q^{r})$ is zero.

As is well-known, every $\cdga$ $A$ has a $1$-minimal model, $\M(A)$, which 
is unique up to homotopy, see e.g.~\cite{GM}.  Such a model 
comes equipped with a morphism $\pi\colon \M(A)\to A$ 
inducing an isomorphism on $H^1$ and a monomorphism on $H^2$.  
We thus obtain a commuting square, 
\begin{equation}
\label{eq:minmod}
\begin{gathered}
\xymatrixrowsep{32pt}
\xymatrix{
\M(B(\A))\ar[d]^{\pi}\ar[r] \ar@{.>}[dr]^{\psi} \ar@/^1pc/@{.>}[d]^{\pi'} 
&\M(\bwedge(\Q^{r}))\ar^{=}[d]\\
B(\A)\ar[r]^{p} & \bwedge(\Q^{r}) \, ,
}
\end{gathered}
\end{equation}
noting that the rational exterior algebra is its own minimal model. 
Let $\varphi=p\circ \pi$.  
Since both $H^1(p)$ and $H^1(\pi)$ are surjective, the map 
$H^1(\varphi)$ is also surjective.  Since the differential 
of $\bwedge(\Q^{r})$ is zero, the map $\varphi$ itself is surjective 
in degree $1$.

Let $\m(G)$ be the pronilpotent Lie algebra associated to $G=\pi_1(M(\A),x)$.  
As it is again well-known, $\m(G)$ coincides with the Lie algebra 
dual to the $1$-minimal model $\M(B(\A))$; we refer to \cite{GM} 
and also \cite[\S7]{SW} for discussion and further references.  
Let $\psi \colon \M(B(\A)) \to  \bwedge(\Q^{r})$ be the $\cdga$ 
morphism dual to the Lie algebra map $\m(\alpha_{X,S})\colon \m(\Z^r)\to \m(G)$. 
By minimality of $\M(B(\A))$, the map $\psi$ lifts to a map 
$\pi'\colon \M(B(\A))\to B(\A)$, see \cite[Lemma 12.4]{FHT}. 
By construction, $\pi'$ is a classifying map for the $1$-minimal model of $B(\A)$; 
thus, $\pi\simeq \pi'$, and so $\varphi\simeq \psi$.    
By Lemma \ref{lem:surj}, the map $\psi$ is surjective in degree $1$;  
hence, $\m(\alpha_{X,S})$ is injective. 

Let $(\alpha_{X,S})_{\Q}\colon (\Z^r)_\Q\to G_\Q$
be the induced homomorphism between rational, prounipotent 
completions.  By the above, this homomorphism is also injective.  
Since $\Z^r$ is a finitely generated, torsion-free abelian
group, the natural map $\iota=\Z^r\to (\Z^r)_\Q$ is injective, as well.   
Hence, the map $\alpha_{X,S}=(\alpha_{X,S})_{\Q} \circ \iota$ 
itself is injective, and we are done.
\end{proof}

\subsection{The main result}
\label{subsec:main}
The cohomological vanishing results in \cite{DSY1} 
made use of the following condition on $G$-modules.

\begin{definition}
\label{def:mcm}
Let $\k=\Z$ or a field, let $R=\k[\Z^n]$ for some $n\geq1$, and let $I_\one$ be 
the augmentation ideal of $R$.  We say that an $R$-module $A$ is a 
{\em maximal Cohen--Macaulay ($\MCM$) module}\/ provided that 
$\depth_R(I_\one,A)\geq n$.  We note that this is slightly weaker than the 
usual notion, since we do not assume $A$ is finitely generated, and we 
allow the possibility that $\Ext_R(R/I_\one,A)$ is identically zero, in which
case we take $\depth_R(I_\one,A)=\infty$.
\end{definition}

Now let $\A$ be an arrangement of smooth, complex subvarieties 
in a smooth, complex variety $Y$, and let $G=\pi_1(M(\A))$.  From now on, 
we will assume $Y$ is compact.  In view of Lemma~\ref{lem:inj}, we will let 
$C_{S,X}$ denote the conjugacy class of the subgroup
$\alpha_{X,x}(C_S)<G$, a free abelian group of rank $\abs{S}$.
We then extend the previous definition to this context, as follows.  

\begin{definition}
\label{def:mcm-arr}
Let $A$ be a (left) $\k[G]$-module.  We say that $A$ is a {\em $\MCM$ module}\/ 
provided that the restriction of $A$ to each subalgebra $\k[C_{S,X}]$ is $\MCM$, for
all flats $X\in L(\A)$ and all nested sets $S\in \N(T\A_X)$.
\end{definition}

Before proceeding, we need to recall some well-known facts about 
Stein manifolds, see e.g. \cite{FG02, For}.   A complex manifold $M$ 
is said to be a {\em Stein manifold}\/ if it can be realized as a closed, 
complex submanifold of some complex affine space.  Alternatively, 
holomorphic functions on $M$ separate points, and $M$ is holomorphically 
convex.
The Stein property 
is preserved under taking closed submanifolds and finite direct products. 
Furthermore, every Stein manifold of (complex) dimension $n$ 
has the homotopy type of a CW-complex of dimension~$n$.

We are now in a position to state and prove our main vanishing-of-cohomology 
result. 

\begin{theorem}
\label{thm:mcm}
Let $\A$ be an arrangement of hypersurfaces in a compact, complex, 
smooth variety $Y$ of dimension $n$. 
Suppose that $M(\A^X)$ is Stein for each $X\in L(\A)$.  Then, for any
$\MCM$ module $A$ on $M(\A)$, we have $H^p(M(\A),A)=0$ for all $p\neq n$.
\end{theorem}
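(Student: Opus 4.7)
The plan is to combine two sources of vanishing: the Stein property of $M(\A)$ itself (handling $p>n$) and the spectral sequence of Theorem~\ref{thm:ss-dsy} together with the MCM vanishing theorem of \cite{DSY1} for central linear arrangements (handling $p<n$).

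First, since $Y\in L(\A)$ as the empty intersection (so $r(Y)=0$), the hypothesis gives that $M(\A)=M(\A^{Y})$ is Stein of complex dimension $n$.  By Andreotti--Frankel, $M(\A)$ has the homotopy type of a CW complex of real dimension at most $n$, so $H^{p}(M(\A),A)=0$ for every $p>n$, irrespective of the coefficient system.

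For $p<n$, I would apply the spectral sequence \eqref{eq:ss arr}, with $\F$ the local system on $M(\A)$ corresponding to $A$, and examine each of the two tensor factors at a fixed $X\in L(\A)$.  Since $M(\A^{X})$ is Stein of complex dimension $n-r(X)$ and canonically $\C$-orientable, Poincaré--Lefschetz duality gives $H^{k}_{c}(M(\A^{X}),-)=0$ for $k<n-r(X)$; setting $k=p+r(X)$, the base factor vanishes whenever $p<n-2r(X)$.  For the fiber factor $H^{q-r(X)}(M(T\A_{X}),\F_{X})$, I would first verify that $\F_{X}$ is an MCM local system on $M(T\A_{X})$ in the sense required by \cite{DSY1}: by Lemma~\ref{lem:inj} and the factorization \eqref{eq:cd} of $\alpha_{X,S}$ through the middle terms, the pullback of $\F_{X}$ along $U_{S}\hookrightarrow M(T\A_{X})$ is identified with $A|_{C_{S,X}}$, which is $\MCM$ by Definition~\ref{def:mcm-arr}.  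Since $T\A_{X}$ is a central linear arrangement of rank $r(X)$, the $\MCM$ vanishing theorem of \cite{DSY1} then yields $H^{k}(M(T\A_{X}),\F_{X})=0$ for $k<r(X)$; taking $k=q-r(X)$, the fiber factor vanishes whenever $q<2r(X)$.

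Combining: for any $(p,q)$ with $p+q<n$ and any $X\in L(\A)$, either $q<2r(X)$ and the fiber factor vanishes, or $q\ge 2r(X)$, in which case $p<n-2r(X)$ and the base factor vanishes.  Hence $E_{2}^{pq}=0$ whenever $p+q<n$, and the spectral sequence gives $H^{p+q}(M(\A),A)=0$ throughout this range, completing the argument.

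The main obstacle lies in reducing the global $\MCM$ hypothesis on $A$ to the local-system $\MCM$ hypothesis of \cite{DSY1} on each $M(T\A_{X})$: this is precisely where Lemma~\ref{lem:inj} is needed, as it identifies the abelian subgroups $C_{S,X}\le\pi_{1}(M(\A))$ appearing in Definition~\ref{def:mcm-arr} with the subgroups $\pi_{1}(U_{S})\le\pi_{1}(M(T\A_{X}))$ featured in the linear-arrangement setup of \cite{DSY1}.  Everything else is a routine bookkeeping of vanishing ranges.
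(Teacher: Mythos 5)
Your proof is correct and follows essentially the same route as the paper's: use the spectral sequence of Theorem~\ref{thm:ss-dsy} to kill $E_2^{pq}$ for $p+q<n$, and use the Stein property of $M(\A)$ itself to kill cohomology above degree $n$. The only cosmetic differences are that the paper expresses the fiber-factor vanishing via the Hopf-fibration reduction $H^i(M(T\A_X),A)\cong H^{i-1}(U(T\A_X),A_{\gamma_X})$ together with the two-sided vanishing $i\neq r(X)$ from \cite[Thm.~5.6]{DSY1}, whereas you only use the one-sided bound $k<r(X)$ (which is all the bookkeeping requires); and you make explicit the role of Lemma~\ref{lem:inj} and diagram~\eqref{eq:cd} in identifying $\F_X$ with a module that is $\MCM$ in the sense of \cite{DSY1}, a step the paper leaves implicit because it is already built into Definition~\ref{def:mcm-arr}.
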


\begin{proof}
We use Theorem~\ref{thm:ss-dsy} and imitate the proof of \cite[Thm.\ 6.3]{DSY1}.
For each $X\in L(\A)$, we recall that the restriction of the Hopf
fibration $\C^*\to M(T\A_X)\to U(T\A_X)$ identifies a central, cyclic subgroup
in $G_X$; let $\gamma_X\in Z(G_X)$ be a generator.
By the $\MCM$ hypothesis and the discussion from \cite[\S4.2]{DSY1}, 
the coinvariant module $A_{\gamma_X}$ is a $\MCM$ module over  $\pi_1(U(T\A_X))
=G_X/\left<\gamma_X\right>$, and
\begin{equation}
\label{eq:him}
H^i(M(T\A_X),A) \cong H^{i-1}(U(T\A_X),A_{\gamma_X}) 
\end{equation}
for all $i> 0$.  Furthermore, this latter group vanishes for $i\neq r(X)$, 
by \cite[Thm.\ 5.6]{DSY1}.
By hypothesis, though, $M(\A^X)$ is a Stein
manifold of (complex) dimension $n-r(X)$. Thus, the factor of
\eqref{eq:ss arr} indexed by $X$ is zero for $p+r(X)<n-r(X)$.  

Combining these facts, we see that $E_2^{pq}=0$, unless $p+q\geq n$; 
therefore, $H^{p+q}(M,A)=0$, unless $p+q\geq n$.  On the other hand, 
$M=M(\A)=M(\A_{\widehat{0}})$ 
is itself Stein, so $H^{p+q}(M,A)=0$ unless $p+q\leq n$.  The conclusion 
follows.
\end{proof}

\begin{remark}
\label{rem:non-stein}
The Stein hypothesis in Theorem \ref{thm:mcm} 
is indispensable.  Indeed, let $X=\C^n$, with $n\ge 2$, and let $\A=\{0\}$. 
Then the complement $U=\C^n\mysetminus \{0\}$ is not Stein, and  
also not an abelian duality space, since $U\simeq S^{2n-1}$.   
Nevertheless, complements of hypersurfaces in Stein manifolds are 
again Stein \cite[Satz~1]{DG}.   
\end{remark}

\subsection{Vanishing of twisted cohomology}
\label{subsec:twist}

As an application of Theorem \ref{thm:mcm}, we can now prove 
the first theorem from the Introduction.

\begin{proof}[Proof of Theorem \ref{thm:intro1}]
Let $G=\pi_1(U)$.
From the definition, we need to show that $H^p(U,A)=0$ for $p\neq n$, and
$H^n(U,A)$ is torsion free, for $A=\Z[G]$ and $A=\Z[G^{\ab}]$.

For this, we need to check that $A$ is a $\MCM$ module.  By our injectivity
result (Lemma~\ref{lem:inj}), the restriction of $A$ to $\Z[C_{S,X}]$ is
a free module, for each local free abelian group $C_{S,X}$, and free
modules are $\MCM$.  Applying Theorem~\ref{thm:mcm} completes 
the proof.
\end{proof}

Another application is the following vanishing result, which proves 
Theorem \ref{thm:genvanish} from the Introduction. 
Recall that, for each stratum $X\in L(\A)$ one can choose a building set
$\G_X$: for each $g\in \G_X$ there is a corresponding boundary divisor
in the wonderful model for $T \A_X$, as well as a generator 
for each free abelian group $C_S$ for which $g\in S$.  Let $\gamma_g\in G$
denote the image of such a generator under the homomorphism $\alpha_{X,S}$
from Lemma~\ref{lem:inj}.

\begin{corollary}
\label{cor:generic vanishing}
Let $\A$ be an arrangement of hypersurfaces satisfying the hypotheses of
Theorem~\ref{thm:mcm}.  Suppose $A$ is a finite-dimensional representation
of $G=\pi_1(M(\A))$ over a field $\k$.  If $A^{\gamma_g}=0$ for all
$g\in \bigcup_{X\in L(\A)}\G_X$, then $H^i(M(\A),A)=0$ for all $0\leq i<n$.
\end{corollary}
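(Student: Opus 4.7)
The plan is to deduce the corollary from Theorem \ref{thm:mcm}: once we verify that the hypothesis on the invariants $A^{\gamma_g}$ upgrades $A$ to an $\MCM$ module in the sense of Definition \ref{def:mcm-arr}, the theorem will deliver $H^p(M(\A),A)=0$ for every $p\neq n$, which is in fact slightly stronger than the claimed range $0\le i<n$.

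The main task is therefore to check, for each pair $(X,S)$ with $X\in L(\A)$ and $S\in\N(T\A_X)$, that the restriction of $A$ to $R:=\k[C_{S,X}]$ is $\MCM$. Writing $r=|S|$, I recall that $C_{S,X}\cong\Z^r$ is generated by the elements $\gamma_g$ for $g$ ranging over $S$, and that the augmentation ideal $I_\one$ of $R\cong\k[\Z^r]$ is generated by the differences $\gamma_g-1$. Since $S\subseteq\G_X$, the hypothesis of the corollary supplies $A^{\gamma_g}=0$ for each such $g$. Picking any single $g\in S$, this says that $\gamma_g-1$ has trivial kernel on $A$, and finite-dimensionality of $A$ over $\k$ upgrades injectivity to full invertibility of $\gamma_g-1$ on $A$.

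The key step---and essentially the only one that is more than bookkeeping---is the conversion of this invertibility into the depth statement from Definition \ref{def:mcm}. The element $\gamma_g-1$ lies in $I_\one$ and hence annihilates $R/I_\one$, so it acts as zero on $\Ext^*_R(R/I_\one,A)$; simultaneously, it acts invertibly on this $\Ext$ through its action on $A$. Consequently $\Ext^*_R(R/I_\one,A)=0$, which in the convention of Definition \ref{def:mcm} means $\depth_R(I_\one,A)=\infty\ge r$. Thus $A$ is $\MCM$ over $R$; as the pair $(X,S)$ was arbitrary, Definition \ref{def:mcm-arr} is satisfied, and Theorem \ref{thm:mcm} applies to finish the proof. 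The only potential subtlety is noting that the $r=0$ case is vacuous and that one needs only a single $g\in S$ rather than every $g\in\G_X$ to run the depth argument, so the hypothesis is actually somewhat stronger than what is strictly required.
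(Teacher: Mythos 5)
Your proof is correct and takes essentially the same route as the paper: verify that the hypothesis forces $A$ to be an $\MCM$ module and then apply Theorem~\ref{thm:mcm}. The paper simply cites \cite[Lem.~5.7]{DSY1} for the $\MCM$ verification, whereas you unpack the underlying depth argument directly; the only small thing you omit is the paper's preliminary remark that the hypothesis is well posed even though $\gamma_g$ is defined only up to conjugacy, since the condition that $1$ is not an eigenvalue of its representing matrix is conjugation-invariant.
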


\begin{proof}
First we note that the hypothesis makes sense: $\gamma_g\in G$
is only defined up to conjugacy; however, the property that $1$ is not an
eigenvalue of the representating matrix for $\gamma_g$ is invariant under
conjugation.  As noted in \cite[Lem.~5.7]{DSY1}, the hypothesis implies
that the $\k[G]$-module $A$ is $\MCM$, and Theorem~\ref{thm:mcm} applies.
\end{proof}

\begin{remark}
\label{rem:generic vanishing}
The vanishing of cohomology in Corollary~\ref{cor:generic vanishing} is 
generic, in the following sense.  For each $g$ in some building set $\G_X$, 
the representations $A\in \Hom(G,\GL_r(\k))$ for which $A^g=0$ form a 
Zariski open set. Since there are finitely many such $g$, the representations 
$Z\subseteq \Hom(G,\GL_r(\k))$ that satisfy the hypotheses of the 
Corollary above form an open subvariety.  If $Z$ is nonempty and the 
representation variety is irreducible, then, $Z$ is Zariski-dense.
\end{remark}

\subsection{Vanishing of $L^2$-cohomology}
\label{subsec:l2}
Finally, we show that the same hypotheses also imply
Theorem \ref{thm:l2vanish} from the Introduction. 
We refer to the survey of Eckmann~\cite{Eck} and
the book of L\"{u}ck~\cite{Lueck} for background on the subject.  
Related vanishing results can be found in \cite{DJLO11, DS13} 
(but see the caveat from the Introduction), as well as in the recent 
preprints \cite{LMWa, LMWb}.

Once again, let $\A$ be an arrangement of hypersurfaces in a 
compact, complex, smooth variety $Y$ of dimension $n$, and 
let $G=\pi_1(M(\A))$.  

\begin{theorem}
\label{thm:l2}
Suppose that $M(\A^X)$ is Stein for each $X\in L(\A)$.  Then the reduced 
$L^2$-cohomology groups $\tensor[^\rd]{H}{^i}(M(\A),\ell_2(G))$ vanish 
for all $i\neq n$.
\end{theorem}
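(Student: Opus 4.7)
The plan is to run the same argument as in Theorem~\ref{thm:mcm}, substituting reduced $L^2$-cohomology with coefficients in $\ell_2(G)$ for ordinary cohomology with $\MCM$ coefficients, and using the Davis--Januszkiewicz--Leary $\ell_2$-vanishing theorem \cite{DJL07} in place of \cite[Thm.~5.6]{DSY1}. Concretely, I would apply (a Hilbert-module adaptation of) the spectral sequence in Theorem~\ref{thm:ss-dsy} with $\F=\ell_2(G)$ and show each entry $E_2^{pq}$ vanishes unless $p+q = n$.

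For the outer, compactly supported factor: the hypothesis that $M(\A^X)$ is Stein of complex dimension $n-r(X)$ forces its homotopy dimension to be $n-r(X)$, so Poincar\'e--Lefschetz duality on the underlying oriented real manifold yields $H^{p+r(X)}_c(M(\A^X); -) = 0$ whenever $p + r(X) < n - r(X)$, i.e., $p < n - 2r(X)$. For the inner factor $H^{q-r(X)}(M(T\A_X); \F_X)$: using Lemma~\ref{lem:inj} to inject $G_X = \pi_1(M(T\A_X))$ into $G$, the restriction of $\ell_2(G)$ to $G_X$ decomposes as a Hilbert direct sum of copies of $\ell_2(G_X)$ indexed by $G_X \backslash G$, so the reduced $L^2$-cohomology splits correspondingly and vanishes whenever the $\ell_2(G_X)$-version does. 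After reducing to the essential rank-$r(X)$ subarrangement of $T\A_X$ (via the flat factor), the theorem of \cite{DJL07} gives vanishing in all degrees except $r(X)$, so $H^{q-r(X)}_{(2)}(M(T\A_X); \F_X) = 0$ unless $q = 2r(X)$. Combining these, $E_2^{pq}$ is nonzero only when $p \geq n - 2r(X)$ and $q = 2r(X)$, forcing $p+q \geq n$. On the other hand, $M(\A)$ itself is Stein of complex dimension $n$, hence has homotopy dimension $n$, so $\tensor[^\rd]{H}{^i}(M(\A);\ell_2(G)) = 0$ for $i > n$ as well. This yields the claimed vanishing for $i \neq n$.

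The main obstacle is that reduced $L^2$-cohomology is not an honest cohomology theory (closures of coboundaries are killed), so the Leray-type spectral sequence of \cite{DSY1} cannot be invoked directly on reduced groups. The natural fix is to track the $\VN(G)$-dimensions of the Hilbert $\VN(G)$-modules at each page: $\dim_{\VN(G)}$ is additive on short exact sequences and well-behaved under convergent spectral sequences of Hilbert modules, so dimension-zero at $E_2$ propagates to dimension-zero in the reduced abutment, which for Hilbert modules means literal vanishing. A secondary check is that the Hilbert direct sum decomposition $\ell_2(G)|_{G_X} \cong \bigoplus_{G_X \backslash G} \ell_2(G_X)$ and the retraction of $M(T\A_X)$ onto its essential core are both compatible with the $\VN(G_X)$-module structure, so no dimensional information is lost in the reduction to the linear-arrangement input of \cite{DJL07}.
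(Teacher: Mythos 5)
Your outline shares the skeleton of the paper's proof (the spectral sequence of Theorem~\ref{thm:ss-dsy}, the Stein bound on the outer factor, the vanishing in degrees $>n$ from $M(\A)$ being Stein), but the two key ingredients are handled quite differently.

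For the inner factor you propose to induct the coefficients from $G_X$ to $G$ via the Hilbert direct sum $\ell_2(G)|_{G_X}\cong\bigoplus_{G_X\backslash G}\ell_2(G_X)$ and then quote the theorem of Davis--Januszkiewicz--Leary \cite{DJL07} for hyperplane complements. The paper instead never invokes the main theorem of \cite{DJL07}; it proves the required vanishing from scratch. Concretely, it shows that for each torus subgroup $C_{S,X}$ the element $\gamma_1-1$ acts on $\ell_2(G)$ injectively and with dense image, so the short exact sequence $0\to\ell_2(G)\xrightarrow{\gamma_1-1}\ell_2(G)\to\ell_2(G)/V\to 0$ has cokernel of von Neumann dimension zero. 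Since $\gamma_1-1$ lies in the augmentation ideal, it also acts by zero on cohomology, forcing $\tensor[^\rd]{H}{^i}(C_{S,X},\ell_2(G))=0$ for \emph{all} $i$ in the quotient category. This is a strengthening of the MCM condition from Definition~\ref{def:mcm-arr}, and then the combinatorial-cover machinery behind \cite[Thm.~5.6]{DSY1} carries over to give the inner vanishing. Your route is shorter but relies on the fact that reduced $L^2$-cohomology commutes with Hilbert direct sums of coefficients (true, but it deserves a justification, especially when $[G:G_X]=\infty$), and it imports a result whose proof is itself essentially the argument the paper is re-running.

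For the foundational issue — that reduced $L^2$-cohomology is not an honest cohomology theory, so the Leray-type spectral sequence cannot be applied naively — you propose to track von Neumann dimensions through the pages. The paper instead passes to the Serre quotient $\cE$ of $\VN(G)$-modules by the subcategory $\cT$ of dimension-zero modules, which is an exact, abelian localization; the entire spectral sequence computation then takes place in $\cE$, where ``dimension zero'' becomes literal vanishing. These two strategies are morally equivalent (dimension is additive and vanishes exactly on $\cT$), but the Serre-quotient framework is the systematic version of what you are doing by hand, and it avoids having to argue separately that ``dimension is well-behaved under convergent spectral sequences.'' If you flesh out your dimension-tracking step you will essentially rediscover the quotient category; as written, that step is the least rigorous part of your proposal.

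Both approaches correctly reach the conclusion; the paper's is more self-contained and the quotient-category bookkeeping is cleaner, while yours trades that for a citation of \cite{DJL07} plus the Hilbert-sum induction, which would need a short additional lemma to be airtight.
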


\begin{proof}
We follow the approach of \cite{DJL07}, which we explain briefly here.
Let $\VN(G)$ denote the group von Neumann algebra of $G$; that is, the
algebra of (right) $G$-invariant bounded endomorphisms of $\ell_2(G)$.  
We have an algebra homomorphism $\C[G]\to \VN(G)$ sending $g\in G$ to 
left-multiplication by $g$.
Instead of working as usual in the abelian category of $\C[G]$-modules, we
let $\cT$ be the subcategory of $\N(G)$-modules of dimension zero, and 
$\cE$ be the (Serre) quotient of $\N(G)$-modules by $\cT$.  Then 
it is known that $\cE$ is an abelian category and the quotient construction
is exact, so our spectral sequence computation of $H^\hdot(M(\A),\ell_2(G))$
may be carried out in $\cE$.

To show that the reduced $L^2$-cohomology of the universal cover of $M(\A)$
is concentrated in cohomology degree $n$, then, we compute in $\cE$.  
For this, suppose 
$C_{S,X}$ is a free abelian subgroup generated by $\gamma_1,\ldots, \gamma_r$
as in Definition~\ref{def:mcm-arr}, where $r=\abs{S}$.  Let $R=\C[C_{S,X}]$.
In order to imitate the proof of Theorem~\ref{thm:mcm}, we will show that
\begin{equation}
\label{eq:L2 torus vanishing}
\tensor[^\rd]{H}{^i}(C_{S,X},\ell_2(G))=0 \quad\text{in $\cE$, for all $i$.}
\end{equation}

For this, let $V=(\gamma_1-1)\ell_2(G)$, and we show that
$\overline{V}=\ell_2(G)$, where
$\overline{\phantom{a}}$ denotes closure in the $\ell_2$-topology.
Noting that $\ell_2(G)$ is a Hilbert space with an orthonormal basis
$\set{g\colon g\in G}$, we can do this by checking that
$\overline{V}^\perp=V^\perp=0$, as follows. 
For any $c\in \ell_2(G)$, we may write 
\begin{equation}\label{eq:L2 c}
c=\sum_{i\in \Z, \, \Z g\in \Z\backslash G}c_{g,i}\gamma_1^ig\, ,
\end{equation}
for some coefficients $c_{g,i}\in \C$, choosing right coset representatives for
$\Z\cong\angl{\gamma_1}$.  If $c\in V^\perp$, then
\begin{eqnarray*}
0 & = & \angl{c,\gamma_1^i g-\gamma_1^{i-1} g} \quad\text{for all $i\in\Z$,
and $\Z g\in \Z\backslash G$,}\\
&=&c_{g,i}-c_{g,i-1}.
\end{eqnarray*}
So, for each $g$, the coefficient $c_{g,i}$ is independent of $i$.  Since by assumption
$\sum \abs{c_{g,i}}^2<\infty$, it must be the case that $c_{g,i}=0$ for all $i$,
which implies $c=0$.

Similarly, we show that, for $c\in \ell(G)$, if 
$(\gamma_1-1)c=0$, then $c=0$.  Writing $c$ as in \eqref{eq:L2 c}, we see 
then $\gamma_1c=c$ if and only if $c_{g,i}=c_{g,i+1}$ for all $i$.  Once
again, square-summability implies each $c_{g,i}=0$.

Combining, we obtain a short exact sequence
\begin{equation}\label{eq:l2ses}
\xymatrix{
0\ar[r] &\ell_2(G)\ar[r]^{\gamma_1-1} & \ell_2(G) \ar[r] & \ell_2(G)/V\ar[r] & 0,
}
\end{equation}
where $\ell_2(G)/V\in \cT$.  
Applying $H^\hdot(C_{S,X},-)$ to \eqref{eq:l2ses} gives 
a long exact sequence which reduces to isomorphisms
\[
\xymatrixcolsep{16pt}
\xymatrix{0\ar[r]&
\tensor[^\rd]{H}{^i}(C_{S,X},\ell_2(G))\ar[rr]^{H(\gamma_1-1)} && 
\tensor[^\rd]{H}{^i}(C_{S,X},\ell_2(G))\ar[r]&0
}
\]
in the quotient category $\cE$ for each $i$.

On the other hand, $\gamma_1-1$ is in the augmentation ideal of 
$C_{S,X}$, so it acts by zero on $H^i(C_{S,X},A)$ for any coefficients
module $A$, hence also on its image in $\cE$.  It follows 
that $\tensor[^\rd]{H}{^i}(C_{S,X},\ell_2(G))=0$, for each $i$, as required.
\end{proof}

\section{Applications}
\label{sect:apps}

\subsection{Linear, toric, and elliptic arrangements}
\label{subsec:toric}
We need the following consequence of a result due to De Concini and 
Gaiffi~\cite{DG16}.

\begin{proposition}
\label{prop:toricCpct}
If $\A$ is a toric arrangement in $\T:=(\C^*)^n$, there exists a 
compactification $Y$ and an arrangement of subvarieties $\L$ in $Y$
for which $U(\A)=Y\mysetminus \bigcup_{K\in\L}K$ and, for each connected
stratum $X\in L(\L)$, the relative interior $U(\A^X)\subseteq X$ is 
a Stein manifold.
\end{proposition}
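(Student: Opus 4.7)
The plan is to invoke the toric wonderful compactification of De~Concini and Gaiffi~\cite{DG16} and then to analyze its stratification, showing that each relative interior $U(\A^X)$ is Stein by exhibiting it as the complement of finitely many closed analytic hypersurfaces in an algebraic torus.

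First, I would apply the construction of \cite{DG16} directly to the toric arrangement $\A$. This produces a smooth projective variety $Y$ together with an open immersion $U(\A)\hookrightarrow Y$ such that the complement is a union of smooth, irreducible hypersurfaces $\L=\{K_1,\dots,K_N\}$ meeting, in local analytic charts, exactly like a hyperplane arrangement; this places $\L$ within the framework of~\S\ref{subsec:sub mfd}. The components of $\L$ come in two flavors: boundary components of a smooth projective toric compactification $\overline{\T}\supset\T$ chosen from a fan that refines the cocharacter sublattices of the subtori appearing in $\A$; and the strict transforms of the closures in $\overline{\T}$ of the hypersurfaces of $\A$, after the iterated blow-ups along irregular intersections prescribed in \cite{DG16}.

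Next, I would analyze the strata of $L(\L)$. For each $X\in L(\L)$ of codimension $r$, the local hyperplane-like structure makes $X$ smooth of dimension $n-r$, and the De~Concini--Gaiffi construction endows $X$ with a canonical residual toric structure: $X$ is an equivariant subvariety for an $(n-r)$-dimensional sub-quotient torus $\T_X$ of $\T$, and the restrictions to $X$ of the components of $\L\setminus\L_X$ form a toric arrangement $\A^X$ inside the natural compactification of $\T_X$, comprising subtori (traces of strict transforms meeting $X$ transversally) together with orbit boundary divisors. Consequently, the relative interior
\begin{equation*}
U(\A^X) \;=\; X\mysetminus D_X \;\cong\; \T_X\mysetminus \textstyle\bigcup \A^X \;\cong\; (\C^*)^{n-r}\mysetminus\textstyle\bigcup_i H_i
\end{equation*}
is the complement of finitely many closed algebraic hypersurfaces in an algebraic torus. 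The Stein conclusion is then immediate: $(\C^*)^{n-r}$ is Stein, being realized as a closed complex submanifold of $\C^{2(n-r)}$ via $(z_i)\mapsto(z_i,z_i^{-1})$, and removing finitely many closed analytic hypersurfaces from a Stein manifold yields a Stein manifold by the Docquier--Grauert theorem cited in Remark~\ref{rem:non-stein}. Hence $U(\A^X)$ is Stein, as required.

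The principal obstacle will be the second step: extracting from the combinatorial blow-up recipe of~\cite{DG16} the clean assertion that every stratum $X$ is toric for a canonically identified sub-quotient torus $\T_X$, and that the trace of $\L$ on $X$ is itself a toric arrangement in the natural compactification of $\T_X$. This demands careful bookkeeping of how the iterated blow-ups along irregular intersections interact with the torus-orbit stratification of $\overline{\T}$, and of how the exceptional divisors produced remain compatible with the residual torus action on each stratum. Once this identification is in hand, the Stein conclusion, and hence the proposition, follows from standard facts without further work.
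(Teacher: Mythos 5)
Your proposal takes essentially the same route as the paper's proof: both invoke the De~Concini--Gaiffi compactification and reduce the Stein claim for $U(\A^X)$ to the fact that it is the complement of a toric arrangement in a torus. The ``principal obstacle'' you flag (that each stratum $X$ carries a residual toric structure) is, however, not left as an open bookkeeping exercise in the paper; it is dispatched directly by citing specific results from \cite{DG16}. Namely, the paper writes each stratum as $X = \overline{gK}\cap Y_C$, where $gK\in L(\A)$ is a coset of a subtorus and $Y_C$ is a closed torus orbit indexed by a cone $C$ in the fan; then \cite[Thm.~3.1]{DG16} gives that $\overline{gK}$ is again a toric variety, and \cite[Prop.~3.1]{DG16} gives that $X$ is the affine toric subvariety corresponding to the chamber $C$ in the cocharacter space of $K$. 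With this explicit description, removing the boundary divisor $D_X$ from $X$ strips away both the torus-orbit boundary pieces $Y_{C'}$ and the closures $\overline{F}$, leaving exactly $gK\setminus\bigcup_{F\not\supseteq gK}F$, a toric arrangement complement in the torus $gK$. One other minor difference in emphasis: you conclude Steinness via Docquier--Grauert (removing analytic hypersurfaces from a Stein manifold), while the paper simply observes that toric arrangement complements are affine varieties, hence Stein; both are valid. Your instinct that the toric identification of the strata is the crux of the proof is correct, and the remedy is precisely the structure theorems of \cite{DG16} rather than a from-scratch analysis of the blow-up recipe.
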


\begin{proof}
De Concini and Gaiffi \cite{DG16} construct a compact toric variety $Y=Y_\Delta$
for which $U(\A)=Y\mysetminus \bigcup_{K\in\L}K$ and $\L$ is an arrangement of 
subvarieties.  Given a stratum $X\in L(\L)$, we may write it as
$X=\overline{gK}\cap Y_C$, where $gK\in L(\A)$ is a coset of a torus
in $\T$, and $Y_C$ is a closed torus
orbit in $Y$, indexed by a (closed) cone $C$ in $\Delta$, the rational fan of
$Y_\Delta$.  

Let $N=\Hom(\C^*,\T)$ denote the lattice of $1$-parameter subgroups, and
let $V_K$ denote the subspace of $N\otimes_{\Z}\R$ given by restriction to
the subtorus $K$.  By \cite[Thm.\ 3.1]{DG16}, the closure $\overline{gK}$ is 
again a toric variety, and $X$ is an affine toric subvariety corresponding
to the chamber $C$ in $V_K$ \cite[Prop.\ 3.1]{DG16}.
Then
\begin{gather}
\begin{aligned}
U(\A^X) &= X \mysetminus \Big(\bigcup_{
\substack{F\in\L\colon \\
F\not\supseteq gK}}
\overline{F} \cup
\bigcup_{\substack{C'\in \Delta\colon\\
 C'\not\subseteq C}} Y_{C'} \Big)\\
&= gK \mysetminus \bigcup_{
\substack{F\in\L\colon \\
F\not\supseteq gK}} F , 
\end{aligned}
\end{gather}
which is the complement of a toric arrangement in the torus $gK$.
Since toric arrangement complements are affine varieties, we conclude 
that $U(\A^X)$ is a Stein manifold.
\end{proof}

\begin{remark}
\label{rem:non-stein-bis}
The proof of Proposition~\ref{prop:toricCpct} relies on a special recursive
property of the construction from \cite{DG16}.  In general, though, 
complements of hypersurfaces in toric varieties need not be 
Stein (see, however, \cite{Ya} for an interesting case in which they are.)  
\end{remark}

\begin{remark}
\label{rem:stein}
It should also be noted that Stein 
manifolds need not be abelian duality spaces. Indeed, let 
$X$ be the complement of a hypersurface in $\C^n$ with $k$ components, where
$k<n$, $n\ge 2$, and $b_n(X)>0$.  Then $X$ is Stein and has homological 
dimension $n$, but $b_1(X)=k$, and so $X$ cannot be an abelian duality 
space of dimension $n$, by \cite[Prop.~5.9]{DSY2}.  

As a concrete example, let $X$ be the complement of the irreducible hypersurface 
$\C^3$ defined by the Brieskorn polynomial $f=x^3+y^3+z^3$. Then the 
Milnor fiber of $f$ is homotopic to a wedge of $8$ spheres, while the 
characteristic polynomial of the algebraic monodromy is $\Delta(t)=
(t-1)^2(t^2+t+1)^3$. The Wang exact sequence of the Milnor fibration 
now shows that $b_1(X)=1$ and $b_2(X)=b_3(X)=2$.
\end{remark}

We are now in a position to prove the second theorem from the Introduction.

\begin{proof}[Proof of Theorem \ref{thm:intro2}]
\eqref{a1}
If $M(\A)\subseteq\C^n$ is the complement of an affine-linear arrangement,
by adding a hyperplane, it is also the complement of an arrangement in 
$\PP^n$, and this result appeared in \cite[Thm.\ 5.6]{DSY1}.  We note that
central arrangement complements are a special case.

\eqref{a2}
This result was also reported in \cite[Cor.~6.4]{DSY1}; however, the 
proof given there is incomplete.  One reduces to the essential case and notes
that the restrictions $\A^X$ are again essential, hence Stein, by
\cite[Prop.\ 6.1]{DSY1}.  To verify that $\Z[G]$ and $\Z[G^{\ab}]$ 
are $\MCM$, one needs to know that the maps 
$\alpha_{X,S}\colon C_{X,S}\to G$
are injective, the proof of which was omitted, but 
is now covered by Lemma \ref{lem:inj}.

\eqref{a3}
By Proposition~\ref{prop:toricCpct}, the toric arrangement complement
admits a compactification which satisfies the conditions of 
Theorem~\ref{thm:intro1}.  The claim follows.
\end{proof}

\subsection{(Orbit) configuration spaces of Riemann surfaces}
\label{subsec:conf}

Let $\Gamma$ be a discrete group that acts freely and properly 
discontinuously on a space $X$.  The {\em orbit configuration 
space}\/ $F_{\Gamma}(X,n)$ is, by definition, the subspace of 
the cartesian product $X^{\times n}$ consisting of $n$-tuples
$(x_1,\ldots,x_n)$ for which the $\Gamma$-orbits of $x_i$ and 
$x_j$ are disjoint for all $1\leq i\neq j\leq n$.  If 
$\abs{\Gamma}=1$, then $F_{\Gamma}(X,n)=F(X,n)$, the
classical (ordered) configuration space.  Orbit configuration spaces were
first investigated in the thesis of Xicot\'{e}ncatl~\cite{Xi97}, and further 
studied, for instance, in \cite{CKX, Ca16}. 

In the case when $X=M$ is a smooth manifold of dimension $d$, and $\Gamma$ 
acts by diffeomorphisms, the orbit configuration space $F_{\Gamma}(M,n)$ is a
smooth manifold of dimension $dn$.  Perhaps the most studied case (and the 
one we are mainly interested here) is when $M=\Sigma_{g,k}$ 
is a Riemann surface of genus $g$ with $k\ge 0$ punctures, 
and $\Gamma$ is finite. 
Note that, if $k=0$, the complement in $\Sigma_g^{\times n}$ of 
$F_{\Gamma}(\Sigma_g,n)$ is the union 
of an arrangement of smooth, complex algebraic hypersurfaces. 

Xicot\'{e}ncatl showed that the classical Fadell--Neuwirth \cite{FN} fibration
applies in the more general case of orbit configuration spaces:
\begin{equation}
\label{eq:fn}
\xymatrixcolsep{20pt}
\xymatrix{
F_{\Gamma}(\Sigma_{g,k+\abs{\Gamma}},n-1) \ar[r]& F_{\Gamma}(\Sigma_{g,k},n)  
\ar[r]&\Sigma_{g,k}}.
\end{equation}

Consider the `tautological' compactification of the orbit configuration space 
$U=F_{\Gamma}(\Sigma_{g,k},n)$, namely $Y=\Sigma_g^{\times n}$.  The 
components of the boundary divisor, $D=Y\mysetminus U$, form an 
arrangement of hypersurfaces, 
\begin{equation}
\label{eq:bn}
\B_n := \set{H_{ij}^\gamma\mid \gamma\in \Gamma,1\leq i\neq j\leq n}\cup
\set{K_{i,l}\mid 1\leq i\leq n, 1\leq l\leq k},
\end{equation}
where $H_{ij}^\gamma$ is given by the equation $x_i=\gamma\cdot x_j$ and
$K_{i,l}$ by $x_i=p_l$, where $p_1,\ldots,p_k\in\Sigma_g$ are the punctures 
of $\Sigma_{g,k}$.

The intersection poset $L(\B_{n})$ can be described in terms of labelled
partitions via a slight generalization of the Dowling lattice.  To describe it, 
we write $\Pi\vDash[n]$ to denote a set partition $\Pi$ of $[n]$.
We regard $\Pi$ as a category whose objects are the set $[n]$, and 
for which there is a (unique) morphism $i\to j$ if and only if
$i\sim j$ in $\Pi$.
The action of $\Gamma$ on $\Sigma_{g,k}$ extends continuously to an 
action on $\Sigma_g$.
We let $\CC:=\CC(\Gamma,\Sigma_g)$ denote the category whose objects
are $\set{\Sigma_{g,k}, \set{p_1},\ldots,\set{p_k}}$, a $\Gamma$-equivariant
partition of $\Sigma_g$.  For a pair of objects $S$, $T$, by definition
$[\gamma]\colon S\to T$ is a morphism whenever $[\gamma]\in \Gamma/\stab(S)$
and $\gamma(S)\subseteq T$, and composition induced by the group operation.

Given a point $x\in Y=\Sigma_g^{\times n}$, let $\Pi_x$ 
be the partition determined by $x$ for which 
$i\sim j$ if and only if $(\Gamma\cdot x_i)\cap (\Gamma\cdot x_j)$ is nonempty.
Then $x_i=\gamma_{ij}x_j$ for some $\gamma_{ij}\in \Gamma$.
For a point $x\in \Sigma_g$, we let $[x]$ be the object of $\CC$ containing
$x$.  Define a functor $f_x\colon \Pi_x\to\CC$ by $f(i)=[x_i]$, 
for each $i\in [n]$,
and let $f_x(j\to i)=[\gamma_{ij}]$, for all $i\sim j$.  It is easy to check
that $f_x$ is well-defined, and has the following property.
\begin{lemma}
\label{lem:blocks}
For all $i,j\in [n]$, 
a point $x\in Y$ is on the hypersurface $H_{ij}^{\gamma}$
if and only if $f_x(j\to i)=[\gamma_{ij}]$.  A point $x\in Y$ is on the
hypersurface $K_{i,l}$ if and only if $f_x(i)=\set{p_l}$.
\end{lemma}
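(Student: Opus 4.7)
The plan is to prove both assertions by unwinding the definitions of the partition $\Pi_x$, the functor $f_x$, and the defining equations of the hypersurfaces. Each statement is essentially a translation between the equation cutting out the hypersurface and the categorical data encoded by $f_x$. The only subtlety is handling the coset representative ambiguity in $\Gamma/\stab(f_x(j))$ uniformly, whether $x_j$ lies in $\Sigma_{g,k}$ (where $\Gamma$ acts freely) or at a puncture (where the stabilizer may be nontrivial but still fixes $x_j$ pointwise).

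For the claim about $H_{ij}^{\gamma}$, I would argue the forward direction as follows. If $x_i=\gamma x_j$, then $x_i$ lies in the $\Gamma$-orbit of $x_j$, so $i\sim j$ in $\Pi_x$ and the morphism $f_x(j\to i)$ is defined. One may take the distinguished element $\gamma_{ij}$ satisfying $x_i=\gamma_{ij}x_j$ to be $\gamma$ itself, so $f_x(j\to i)=[\gamma]$ in $\Gamma/\stab(f_x(j))$. For the converse, suppose $f_x(j\to i)=[\gamma]$. This already forces $i\sim j$, so there is some $\gamma_{ij}\in\Gamma$ with $x_i=\gamma_{ij}x_j$ and with $\gamma^{-1}\gamma_{ij}\in\stab(f_x(j))$. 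The key observation is that any element of $\stab(f_x(j))$ fixes the point $x_j$: if $x_j\in\Sigma_{g,k}$, then $\stab(f_x(j))$ is trivial because $\Gamma$ acts freely on $\Sigma_{g,k}$; if $x_j=p_l$, then $\stab(f_x(j))=\stab(p_l)$ is the isotropy group of $p_l$, which by definition fixes $p_l$. In either case, $\gamma x_j=\gamma_{ij}x_j=x_i$, so $x\in H_{ij}^{\gamma}$.

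The statement about $K_{i,l}$ is immediate from the construction of $f_x$ on objects: by definition $f_x(i)=[x_i]$, the object of $\CC$ containing $x_i$, and this object equals the singleton $\{p_l\}$ if and only if $x_i=p_l$, which is precisely the defining equation of $K_{i,l}$. The principal obstacle, such as it is, is the bookkeeping that ensures $f_x$ is a well-defined functor — specifically, that $f_x(j\to i)$ does not depend on the choice of the element $\gamma_{ij}$ representing the coset, and that this uses only the free-action hypothesis on $\Sigma_{g,k}$ together with the pointwise-stabilizer description of $\stab$. Once this is in place, both halves of the lemma follow from a routine translation.
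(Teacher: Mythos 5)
Your argument is correct; the paper offers no proof of this lemma beyond the remark that ``it is easy to check,'' and your unwinding of the definitions supplies exactly what is intended. You also isolate the one genuine subtlety correctly: $\stab(f_x(j))$ must be read as the isotropy group of the point $x_j$ --- trivial when $x_j\in\Sigma_{g,k}$ by the free-action hypothesis, and equal to $\stab(p_l)$ when $x_j=p_l$ --- so that every element of $\stab(f_x(j))$ fixes $x_j$ pointwise, the coset $[\gamma]\in\Gamma/\stab(f_x(j))$ determines $\gamma x_j$, and the equivalence between $x_i=\gamma x_j$ and $f_x(j\to i)=[\gamma]$ holds in both directions.
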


Now we can describe the stratification of $Y$.  Let
\begin{equation}
\label{eq:triples}
\pairs_n = \set{(\Pi,f)\colon \Pi\vDash[n],\,
f\in\Funct(\Pi,\CC)}.
\end{equation}
Consider the function $\Phi\colon Y\to \pairs_n$ given by $\Phi(x)=(\Pi_x,f_x)$.
Let $X_{\Pi,f}=\Phi^{-1}(\Pi,f)$ for each pair $(\Pi,f)\in \pairs_n$.

\begin{proposition}
\label{prop:strata}
For any pair $(\Pi,f)\in\im(\Phi)$, 
the space $X_{\Pi,f}$ coincides with $M(\B_n^X)$, where
\begin{align}
X & =\overline{X_{\Pi,f}} \nonumber \\
 & = \bigcap_{\substack{i,j,\gamma\colon\\
f(j\to i)=[\gamma_{ij}]}}H_{ij}^{\gamma}\cap
\bigcap_{\substack{i,l\colon\\
f(i)=\set{p_l}}}K_{i,l} \label{eq:X pi f}
\end{align}
\end{proposition}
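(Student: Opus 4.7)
The plan is to set $Z$ equal to the intersection of hypersurfaces on the right-hand side of \eqref{eq:X pi f}, verify that $Z=\overline{X_{\Pi,f}}$, and then identify this common space with $M(\B_n^X)$. Throughout, the key input will be Lemma~\ref{lem:blocks}, which translates the $(\Pi_x,f_x)$ data of a point $x\in Y$ into its incidence with the hypersurfaces $H_{ij}^\gamma$ and $K_{i,l}$.

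First I would verify the inclusion $X_{\Pi,f}\subseteq Z$, which is immediate: for $x$ with $(\Pi_x,f_x)=(\Pi,f)$, every relation recorded by $(\Pi,f)$ translates via Lemma~\ref{lem:blocks} into a hypersurface containing $x$. Next I would parametrize $Z$ block-by-block on $\Pi$. For a block $B$ with $f(B)=\Sigma_{g,k}$, picking a base index $i_0\in B$ and setting $x_i=\gamma_{i_0 i}\cdot x_{i_0}$ for the remaining $i\in B$ yields a consistent solution of the equations $x_i=\gamma_{ij}\cdot x_j$, parametrized by $x_{i_0}\in \Sigma_g$; for a block with $f(B)=\{p_l\}$, the equations collapse $B$ to the point $(p_l,\dots,p_l)$. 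This identifies $Z$ with a product of copies of $\Sigma_g$ and finitely many points, hence a smooth, irreducible submanifold of $Y$. Moreover, $X_{\Pi,f}$ coincides with the subset of $Z$ on which no hypersurface in $\B_n$ other than those listed in the intersection defining $Z$ vanishes, and each such extra vanishing condition cuts out a proper closed subvariety of $Z$ (otherwise, by Lemma~\ref{lem:blocks}, we would have $(\Pi_x,f_x)\neq(\Pi,f)$ at every $x\in X_{\Pi,f}$, contradicting $(\Pi,f)\in\im(\Phi)$). Hence $X_{\Pi,f}$ is Zariski-open and dense in $Z$, giving $X=\overline{X_{\Pi,f}}=Z$.

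Finally, I would identify the closed subarrangement $(\B_n)_X=\{W\in\B_n:X\subseteq W\}$. Since $X_{\Pi,f}\neq\emptyset$ by hypothesis, a hypersurface $W\in\B_n$ contains $X$ if and only if it contains some (equivalently every) point of $X_{\Pi,f}$; by Lemma~\ref{lem:blocks} this happens exactly when $W$ is one of the hypersurfaces in the intersection defining $Z$. Consequently $M(\B_n^X)=X\setminus\bigcup_{W\in\B_n\setminus(\B_n)_X}W$ is exactly the set of $x\in X$ for which no additional relation holds, i.e., for which $\Pi_x=\Pi$ and $f_x=f$, which is $X_{\Pi,f}$. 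The main obstacle I anticipate is the middle step: justifying that the block-by-block parametrization of $Z$ is a smooth embedding of the correct dimension, and that each extra hypersurface meets $Z$ in a proper subvariety. These points reduce to combinatorial bookkeeping with the group action on $\Sigma_g$ and the composition rules in $\CC$, together with the nonemptiness assumption $(\Pi,f)\in\im(\Phi)$.
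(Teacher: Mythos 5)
Your proposal is correct and follows essentially the same route as the paper: both hinge on Lemma~\ref{lem:blocks} to translate the $(\Pi_x,f_x)$ data into incidences with the hypersurfaces $H_{ij}^\gamma$ and $K_{i,l}$, and then read off $X_{\Pi,f}=M(\B_n^X)$. The paper's proof is much terser, leaving implicit the block-by-block parametrization of the intersection and the verification that $X_{\Pi,f}$ is dense in it, which you spell out.
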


\begin{proof}
By Lemma~\ref{lem:blocks}, a point $x$ belongs to $X_{\Pi,f}$ if and only if it is 
in the intersection \eqref{eq:X pi f}, and $x$ is not contained in any other
hypersurface.  But this is exactly the space $M(\B_n^X)$.
\end{proof}

\begin{proposition}
\label{prop:Sigma}
If $k>0$, then for each $X\in L(\B_n)$, the complement $M(\B_{n}^X)$ is 
a Stein manifold.
\end{proposition}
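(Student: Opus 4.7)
The plan is to identify the stratum $M(\B_n^X)$ with an orbit configuration space on $\Sigma_{g,k}$ with fewer points, and then invoke standard results on Stein manifolds. By Proposition~\ref{prop:strata}, we may write $M(\B_n^X)=X_{\Pi,f}$ for some $(\Pi,f)\in \pairs_n$; let $m$ denote the number of blocks of $\Pi$. Call a block $B$ \emph{free} if $f(B)=\Sigma_{g,k}$, and let $m'$ denote the number of free blocks; each remaining block satisfies $f(B)=\set{p_l}$ for some puncture $p_l$. I would begin by choosing a leader $i_B\in B$ in each block and considering the projection $\pi\colon \Sigma_g^{\times n}\to \Sigma_g^{\times m}$ defined by $\pi(x_1,\ldots,x_n)=(x_{i_B})_B$. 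On $X_{\Pi,f}$, the relations $x_j=\gamma_{i_B,j}^{-1}x_{i_B}$ (with $[\gamma_{i_B,j}]=f(j\to i_B)$) recover every $x_j$ from the leader $x_{i_B}$, so $\pi$ restricts to an embedding of $X_{\Pi,f}$.

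Next, I would observe that each coordinate $x_{i_B}$ corresponding to a fixed block is constant at the prescribed puncture, so the image of $X_{\Pi,f}$ under $\pi$ factors through $\set{p_1,\ldots,p_k}^{m-m'}\times\Sigma_{g,k}^{\times m'}$. The condition that $x$ lie in $M(\B_n^X)=X_{\Pi,f}$, and not in any smaller stratum, forces any two free coordinates to lie in distinct $\Gamma$-orbits of $\Sigma_{g,k}$; disjointness between free coordinates and the fixed punctures is automatic, since $\Sigma_{g,k}=\Sigma_g\setminus\set{p_1,\ldots,p_k}$ and $\Gamma$ permutes the punctures. Unwinding these constraints yields a biholomorphism $X_{\Pi,f}\cong F_{\Gamma}(\Sigma_{g,k},m')$.

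Finally, since $k>0$, the Riemann surface $\Sigma_{g,k}$ is non-compact, hence Stein by the Behnke--Stein theorem; its $m'$-fold product is Stein as well. The orbit configuration space $F_{\Gamma}(\Sigma_{g,k},m')$ is the complement in $\Sigma_{g,k}^{\times m'}$ of the finite union of analytic hypersurfaces $\set{y_a=\gamma\cdot y_b : a\neq b,\,\gamma\in\Gamma}$. Invoking the classical fact that the complement of an analytic hypersurface in a Stein manifold is Stein, applied iteratively over this finite union, we conclude that $F_{\Gamma}(\Sigma_{g,k},m')$, and therefore $M(\B_n^X)$, is Stein. The main obstacle is bookkeeping at the second step: one must unpack Proposition~\ref{prop:strata} and the category $\CC$ carefully enough to verify that the stratum really collapses to the lower-dimensional orbit configuration space on the free blocks, with exactly the correct avoidance conditions.
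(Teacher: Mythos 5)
Your proof is correct and takes essentially the same approach as the paper: reduce $M(\B_n^X)$ to a smaller orbit configuration space $F_\Gamma(\Sigma_{g,k},m')$ via Proposition~\ref{prop:strata}, then argue that this is a hypersurface complement in the Stein manifold $\Sigma_{g,k}^{\times m'}$ and hence Stein. The paper states the identification more tersely (``the arrangement $\B_n^X$ is again an orbit configuration space, $F_\Gamma(\Sigma_{g,k},m)$, where $m$ is the number of diagonal blocks of $\Pi$''), whereas you carry out the bookkeeping explicitly — choosing block leaders, separating free from fixed blocks, and verifying that the avoidance conditions collapse correctly — so your write-up supplies details that the original proof leaves implicit.
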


\begin{proof}
First we check the case $X=Y$, where $\B_{n}^X=\B_{n}$.  Here, 
$M(\B_n)$ is a hypersurface complement in $\Sigma_{g,k}^{\times n}$.
As mentioned previously, the product of open Riemann surfaces is Stein, 
and a hypersurface complement in a Stein manifold is again Stein; 
thus, the base case is proved.

In general, for a stratum $X_{\Pi,f}$ defined above, 
it is easy to see from Proposition~\ref{prop:strata} that the arrangement
$\B_n^X$ is again an orbit configuration space, $F_\Gamma(\Sigma_{g,k},m)$,
where $m$ is the number of diagonal blocks of $\Pi$.
This reduces the claim to the case $X=Y$ treated above.
\end{proof}

Now we may characterize the duality properties of orbit configuration spaces 
of points in Riemann surfaces.
\begin{proof}[Proof of Theorem~\ref{thm:intro3}]
For $g>0$ and $k>0$, we see that $F_{\Gamma}(\Sigma_{g,k},n)$ is both a duality
space and an abelian duality space of dimension $n$: by Proposition~\ref{prop:Sigma}, 
the configuration space satisfies the hypotheses of Theorem~\ref{thm:intro1}.

For $k=0$, we note that $\Sigma_{g}$ is a (Poincar\'e) 
duality space of dimension $2$.
Noting also that $F_{\Gamma}(\Sigma_{g,1},1)=\Sigma_{g,1}$, we see that 
$F_{\Gamma}(\Sigma_{g},n)$ is a duality space of dimension $n+1$ for all 
$n\geq1$ by induction, using
the fibration sequence \eqref{eq:fn} and a classical result of Bieri
and Eckmann~\cite[Theorem~3.5]{BE}.

If $k=0$ and $g=1$, the configuration space 
$F_{\Gamma}(\Sigma_{g},n)$ is an elliptic arrangement complement,
hence an abelian duality space by Theorem~\ref{thm:intro2}.
However, if $k=0$ and $g\geq2$, it need not be an abelian duality space.
The (easy) case $n=1$ appears as \cite[Example\ 5.8]{DSY2}: $\Sigma_{g,k}$ is
an abelian duality space if and only if $k\geq 1$.
For $n\ge 2$, we restrict our attention to the case 
where $\Gamma$ is trivial (and $k=0$).

Again from the fibration sequence \eqref{eq:fn} and by induction 
on $n$ we see that $F(\Sigma_g,n)$ is a CW-complex of dimension 
at most $n+1$. % and thus $b_i(F(\Sigma_g,n))=0$ for $i>n+1$. 
Furthermore, it is known that $b_3(F(\Sigma_g,2))=2g$ (see e.g.~\cite[Cor.~12]{Azam}) 
and that $b_{n+1}(F(\Sigma_g,n))\ne 0$, for all $n\ge 3$ (see \cite[Prop.~1.4]{Maguire}). 
Thus, if $F(\Sigma_g,n)$ were to be an abelian duality space, it would 
have to be of (formal) dimension $n+1$. 
On the other hand, by \cite{FT}, the generating series for the Euler 
characteristics of the unordered configuration spaces of $\Sigma_g$ 
is given by 
\begin{equation}
\label{eq:ft}
1+ \sum_{n\ge 1} \chi(C(\Sigma_g,n)) t^n = (1+t)^{2-2g}.
\end{equation}
Therefore, $(-1)^n  \chi(C(\Sigma_g,n)) >  0$, and hence 
$(-1)^n  \chi(F(\Sigma_g,n)) > 0$, also.  In view of \eqref{eq:signedeuler},
we conclude that $F(\Sigma_g,n)$ is not an abelian 
duality space.

Last, we consider the genus $g=0$ case.  If $k>0$, then 
$F_{\Gamma}(\Sigma_{0,k},n)$
is an affine-linear hyperplane arrangement complement of corank $0$.
By Theorem~\ref{thm:intro2} again, it is both a duality space and an abelian
duality space of dimension $n$. 

It remains to show that if $g=0$ and $k=0$, then $F(\Sigma_0,n)$ is
neither a duality nor an abelian duality space.  For $n\leq 2$, this is clear,
since then $F(\Sigma_0,n)\simeq S^2$ from \eqref{eq:fn}.  
For $n\ge 3$, we have (see, e.g., \cite{FZ})
\begin{equation}
\label{eq:fz}
F(\Sigma_0,n) \cong \PGL_2(\C)\times F(\Sigma_{0,2},n-2),
\end{equation}
where $F(\Sigma_{0,2},n-2)$ is the complement of an affine-linear hyperplane
arrangement in $\C^{n-2}\subseteq \PP^{n-2}$.  
This is both a duality space and an 
abelian duality space but $\PGL_2(\C)\simeq \SO(3)$ 
is neither, from which the conclusion follows.
\end{proof}

\begin{ack}
We would like to thank Giovanni Gaiffi for helping us apply his work
with Corrado De Concini to our Proposition~\ref{prop:toricCpct}, 
and Stefan Papadima for help with Lemma~\ref{lem:surj}.  We also 
thank the referees for their careful reading of our manuscript and 
for their many insightful comments and suggestions. 
We gratefully acknowledge support by the Research in Pairs program 
during our stay at the Mathematisches Forschungsinstitut Oberwolfach 
in May--June 2016; the
first author would also like to thank the University of Sydney School of
Mathematics and Statistics for its hospitality.
\end{ack}

\newcommand{\arxiv}[1]
{\texttt{\href{http://arxiv.org/abs/#1}{arXiv:#1}}}
\newcommand{\arx}[1]
{\texttt{\href{http://arxiv.org/abs/#1}{arXiv:} 
\href{http://arxiv.org/abs/#1}{#1}}}
\newcommand{\doi}[1]
{\texttt{\href{http://dx.doi.org/#1}{\nolinkurl{doi:#1}}}}

\renewcommand{\MR}[1]
{\href{http://www.ams.org/mathscinet-getitem?mr=#1}{\textbf{MR}\:#1}}
\newcommand{\MRh}[2]
{\href{http://www.ams.org/mathscinet-getitem?mr=#1}{\textbf{MR}\:#1 (#2)}}

\bibliographystyle{amsalpha}
\providecommand{\bysame}{\leavevmode\hbox to3em{\hrulefill}\thinspace}
\providecommand{\MR}{\relax\ifhmode\unskip\space\fi MR }
% \MRhref is called by the amsart/book/proc definition of \MR.
\providecommand{\MRhref}[2]{%
  \href{http://www.ams.org/mathscinet-getitem?mr=#1}{#2}
}
\providecommand{\href}[2]{#2}

\end{document}